\documentclass[12pt,reqno]{amsart}
\topmargin -0.1cm
\advance \topmargin by -\headheight
\advance \topmargin by -\headsep
\setlength{\paperheight}{270mm}%
\textheight 22.5cm
\oddsidemargin 0.2cm
\evensidemargin \oddsidemargin
\marginparwidth 2cm
\textwidth 16.4cm

\usepackage{amsmath}
\usepackage{amsfonts}
\usepackage{stmaryrd}
\usepackage{amssymb}
\usepackage{amsthm}
\usepackage{csquotes}
\usepackage{color}
\usepackage{mathrsfs}
\usepackage{dsfont}
\usepackage{bm}
\usepackage{cite}
\usepackage{soul}

\numberwithin{equation}{section}
\renewcommand\vec{\bm}


\newtheorem{theorem}{Theorem}[section]

\newtheorem{lemma}[theorem]{Lemma}
\newtheorem{Proposition}[theorem]{Proposition}
\newtheorem{Conjecture}[theorem]{Conjecture}

\usepackage{mathtools}

\def\L{\mathcal{L}}
\def\ZZ{\mathbb{Z}}
\def\RR{\mathbb{R}}

\def\FF{\mathbb{F}}
\def\QQ{\mathbb{Q}}
\def\GL{\mathrm{GL}}
\def\M{\mathrm{Mat}}

\title{Brunn--Minkowski type estimates for certain discrete sumsets}

\author{Albert Lopez Bruch}
\address{King's College London, London WC2R 2LS, UK}
\email{albert.lopez\_bruch@kcl.ac.uk}

\author{Yifan Jing}
\address{Department of Mathematics, the Ohio State University, Columbus 43212, USA}
\email{jing.245@osu.edu}

\author{Akshat Mudgal}
\address{Mathematics Institute, Zeeman Building, University of Warwick, Coventry CV4 7AL, UK}
\email{Akshat.Mudgal@warwick.ac.uk}

\thanks{ALB was supported by a Summer Research Internship from Mathematical Institute, Oxford; while YJ and AM were supported by Ben Green’s Simons Investigator Grant, ID:376201. }

\subjclass[2020]{11B30, 11B83, 05D40} 
\keywords{Higher dimensional sumsets, Brunn--Minkowski inequality}

\renewcommand\vec{\bm}

\begin{document}

\begin{abstract}
Let $d,k$ be natural numbers and let $\mathcal{L}_1, \dots, \mathcal{L}_k \in \mathrm{GL}_d(\mathbb{Q})$ be linear transformations such that there are no non-trivial subspaces $U, V \subseteq \mathbb{Q}^d$ of the same dimension satisfying $\L_i(U) \subseteq V$ for every $1 \leq i \leq k$. For every non-empty, finite set $A \subset \mathbb{R}^d$, we prove that
\[ |\mathcal{L}_1(A) + \dots + \mathcal{L}_k(A) | \geq k^d |A| - O_{d,k}(|A|^{1- \delta}), \]
where $\delta >0$ is some absolute constant depending on $d,k$. Building on work of Conlon--Lim, we can show stronger lower bounds when $k$ is even and $\mathcal{L}_1, \dots, \mathcal{L}_k$ satisfy some further incongruence conditions, consequently resolving various cases of a conjecture of Bukh. Moreover, given any $d, k\in \mathbb{N}$ and any finite, non-empty set $A \subset \RR^d$ not contained in a translate of some hyperplane, we prove sharp lower bounds for the cardinality of the $k$-fold sumset $kA$ in terms of $d,k$ and $|A|$. This can be seen as a $k$-fold generalisation of Freiman's lemma.
\end{abstract}

\maketitle

\section{Introduction}

A classical result in analysis concerns estimating size of sumsets of arbitrary subsets of Euclidean spaces. In particular, given positive integers $d,k$ and non-empty, compact sets $\mathcal{A}_1, \mathcal{A}_2, \dots, \mathcal{A}_k \subseteq \mathbb{R}^d$, the well-known Brunn--Minkowski inequality states that the sumset
\[ \mathcal{A}_1 + \dots + \mathcal{A}_k = \{a_1 + \dots + a_k : a_1 \in \mathcal{A}_1, \dots, a_k \in \mathcal{A}_k \} \]
satisfies
\begin{equation} \label{bm1}
    \mu(\mathcal{A}_1 + \dots + \mathcal{A}_k) \geq (\mu(\mathcal{A}_1)^{1/d} + \dots + \mu(\mathcal{A}_k)^{1/d} )^d,
\end{equation} 
where we denote $\mu$ to be the Lebesgue measure in $\mathbb{R}^d$. Moreover, such a result can be seen to be sharp, by considering the case when $\mathcal{A}_1, \dots, \mathcal{A}_k$ are homothetic convex sets. It is natural to ask whether such an estimate remains true in the discrete setting. In this endeavour, we define for every finite, non-empty set $A \subset \mathbb{R}^d$, the dimension $\dim(A)$ to be the dimension of the affine span of the set $A$. Analogously to the continuous setting, one may naively expect that whenever $\dim(A) = d$, then the set $kA = \{a_1 + \dots + a_k : a_1, \dots, a_k \in A\}$ satisfies $|kA| \geq (k^d-o(1))|A|$. However, this fails to be true as evinced by the example 
\begin{equation} \label{ex10}
    A_N = \{0,e_2,\dots, e_d\} + \{e_1,2 \cdot e_1,\dots, (N-d) \cdot e_1 \}, 
\end{equation} 
where $\{e_1, \dots , e_d\}$ denotes the canonical basis of $\RR^d$. In particular, $|kA_N|/|A_N| \leq (d+1)^{k-1}$, and so, one can expect the so-called doubling to grow at most polynomially in $d$.

It is natural to analyse the conditions that sets $A,B \subseteq \mathbb{R}^d$ of cardinality $N$ must satisfy so that $|A+B| \geq (C^d -o(1)) N$ holds for some constant $C > 1$. One possible answer to this is given by Green--Tao \cite{GT2006}, who proved that whenever $A \subseteq \mathbb{Z}^d$ contains $\{0,1\}^d$, then $|2A| \geq 2^{d/2} |A|$, see also \cite{MRSZ2022}. A recent breakthrough result of Green--Gowers--Manners--Tao \cite{GGMT2023} on the polynomial Frieman--Ruzsa conjecture implies the existence of constants $C,C'>1$ such that any finite set $A \subset \mathbb{Z}^d$ satisfying $|A+A| < C^d|A|$ must contain $A' \subseteq A$ such that $|A'| \geq |A|/C^{dC'}$ and $\dim(A') \leq d-1$. Both these results have found various applications towards the sum-product conjecture and related topics, see \cite{PZ2020, Mu2023, Mu2023b}.
\par

In this paper, we investigate a third such setting which has been explored in a conjecture of Conlon--Lim \cite{CL2022}, which itself is a revised version of a previous question raised by Bukh \cite[Problem $4$]{Bu2007}. This concerns sumsets of the form 
\[ \L_1(A) + \dots + \L_k(A)  = \{ \L_1(a_1) + \dots + \L_k(a_k) : a_1, \dots, a_k \in A   \}   \]
where $\L_1, \dots, \L_k \in \M_d(\ZZ)$ and $A\subseteq \RR^d$ is a finite set. Following \cite{CL2022}, we denote matrices $\L_1, \dots ,\L_k \in \M_d(\mathbb{Z})$ to be \emph{irreducible} if there are no non-trivial subspaces $U, V \subseteq \mathbb{Q}^d$ of the same dimension such that $\L_i(U) \subseteq V$ for every $1 \leq i \leq k$. Furthermore, we write $\L_1, \dots, \L_k$ to be \emph{coprime} if there do not exist matrices $\mathcal{P},\mathcal{R} \in \GL_d(\QQ)$ satisfying $0 < |\det (\mathcal{P}) \det (\mathcal{R})|< 1$ such that $\mathcal{P} \L_1 \mathcal{R}, \dots, \mathcal{P} \L_k \mathcal{R} \in \M_d(\ZZ)$. With these definitions in hand, we state the following conjecture from \cite{CL2022}.

\begin{Conjecture} \label{cl2}
Given $d,k \in \mathbb{N}$ and irreducible, coprime $\L_1, \dots, \L_k \in \M_d(\ZZ)$ and a finite, non-empty set $A \subseteq \ZZ^d$, one has 
\[ |\L_1(A) + \dots + \L_k(A)| \geq ( \det(\L_1)^{1/d} + \dots + \det(\L_k)^{1/d})^d |A| - o(|A|). \]
\end{Conjecture}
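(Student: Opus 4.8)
The plan is to argue by induction on the dimension $d$, in the spirit of the slicing proof of the continuous Brunn--Minkowski inequality \eqref{bm1}; the whole difficulty is that, as the example \eqref{ex10} shows, a discrete set can be ``flat'' in a direction that destroys the factor $k^d$, and the point of the irreducibility hypothesis is precisely that $A$ cannot be flat in a direction compatible with all of $\L_1,\dots,\L_k$ at once. In the base case $d=1$ the irreducibility hypothesis is vacuous, coprimality reduces to $\gcd(\L_1,\dots,\L_k)=1$, and the required bound follows from known results on sums of dilates over $\ZZ$. Before inducting I would record two normalisations: left multiplication of the $\L_i$ by a fixed element of $\GL_d(\QQ)$ leaves $|\L_1 A+\dots+\L_k A|$ unchanged, and replacing $A$ by $\mathcal{R}(A)$ with $\mathcal{R}\in\GL_d(\QQ)$ leaves $|A|$ unchanged, so the $\L_i$ may be put into a convenient normal form; and the coprimality hypothesis guarantees that no such move can artificially inflate $(\det(\L_1)^{1/d}+\dots+\det(\L_k)^{1/d})^d$, so this remains the honest target (the case of singular $\L_i$ being easier and treated separately).

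For the inductive step I would split according to how $A$ sits relative to hyperplanes. In the \emph{spread-out} regime, where no affine hyperplane contains more than $|A|^{1-\delta}$ of the points of $A$ (or of $\L_i^{-1}\L_j(A)$ for any $i,j$), I would choose a linear functional $\psi$ on $\QQ^d$ and slice $B=\L_1(A)+\dots+\L_k(A)$ by its level sets. Since each $\L_i$ carries a level set of $\psi\circ\L_i$ into a level set of $\psi$, a non-empty level set of $B$ contains, for each way of writing its value as a sum of values of the $\psi\circ\L_i$, a sum $\L_1(S_1)+\dots+\L_k(S_k)$ of slices of $A$; after identifying the relevant parallel hyperplanes this is an instance of the theorem in dimension $d-1$ for a family of transformations derived from $\L_1,\dots,\L_k$, and a generic choice of $\psi$ keeps that derived family invertible, irreducible and coprime. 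Summing the $(d-1)$-dimensional bounds over the level sets of $\psi$ then supplies the extra factor of $k$, giving $k^d$ overall; replacing the generic $\psi$ by one that, at each stage, optimises the index of $\psi\circ\L_i$ in $\ZZ$ should upgrade this to the determinant-weighted constant, and this is where the Conlon--Lim estimates enter when $k$ is even and the additional incongruence conditions hold. The boundary level sets and the non-generic slices account for the error term $O_{d,k}(|A|^{1-\delta})$.

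In the \emph{concentrated} regime some affine hyperplane $H$ contains at least $|A|^{1-\delta}$ points of $A$, and the aim is to drop to dimension $d-1$. The crucial structural fact is that irreducibility forces $\L_1(H),\dots,\L_k(H)$ to span $\QQ^d$ (otherwise they would all equal a single hyperplane, contradicting the hypothesis with $U=H$), and this transversality lets one project $\QQ^d$ onto a suitable $\QQ^{d-1}$ so that each $\L_i$ induces an invertible $(d-1)\times(d-1)$ transformation $\mathcal{M}_i$, with $\mathcal{M}_1,\dots,\mathcal{M}_k$ still irreducible and coprime. Feeding the large planar piece $A\cap H$ into the induction hypothesis, and using the points of $A$ lying off $H$ to recover the jump from the $(d-1)$-dimensional to the $d$-dimensional constant, then yields the bound; the $k$-fold generalisation of Freiman's lemma proved earlier in the paper is used both to dispose of the genuinely low-dimensional sets that surface and to keep every error of the uniform shape $O_{d,k}(|A|^{1-\delta})$.

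The step I expect to be the main obstacle is the concentrated regime, and within it the dimension reduction itself: one must produce from $\L_1,\dots,\L_k$ and $H$ a genuinely $(d-1)$-dimensional family that is still irreducible \emph{and} coprime, so that the induction closes, while simultaneously accounting for the off-hyperplane points that are responsible for the passage from the $(d-1)$-dimensional to the $d$-dimensional constant, and losing no more than $O_{d,k}(|A|^{1-\delta})$. This is also exactly where the gap between the unconditional bound $k^d|A|-O_{d,k}(|A|^{1-\delta})$ and the full conjectural constant $(\det(\L_1)^{1/d}+\dots+\det(\L_k)^{1/d})^d$ makes itself felt: carrying the determinant weights faithfully through the reduction seems to require the extra incongruence hypotheses together with the Conlon--Lim input, which is why only particular cases of Conjecture~\ref{cl2} appear to be within reach of this strategy.
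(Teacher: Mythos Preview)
First, a framing point: Conjecture~\ref{cl2} is \emph{not} proved in the paper. What the paper establishes is Theorem~\ref{e1}, namely the unconditional bound $k^d|A|-O_{d,k}(|A|^{1-\delta})$ and the full determinant-weighted bound only under the extra hypothesis that $k$ is even and each pair $\L_{2i-1},\L_{2i}$ is coprime and irreducible. So there is no ``paper's own proof'' of the conjecture to compare against; the relevant comparison is with the proof of Theorem~\ref{e1}.

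Your approach is genuinely different from the paper's, and I do not think it works as stated. The paper does \emph{not} induct on $d$ via slicing. Instead it proves a discrete Brunn--Minkowski lemma by compression (Lemma~\ref{gen:lem:2.1}): for any basis $\mathcal{B}$,
\[
|A_1+\dots+A_k|\ \geq\ \bigl(|A_1|^{1/d}+\dots+|A_k|^{1/d}\bigr)^d\ -\ \sum_{I\subsetneq[d]}(k-1)^{d-|I|}\,|\pi_I(A_1+\dots+A_k)|,
\]
with the main term coming directly from the continuous Brunn--Minkowski inequality applied to cube-thickenings of compressed down-sets. The remaining work is to control the projection terms, and this is where the dichotomy you intuited appears, but in a different guise: assuming $|X|\leq K|A|$, the paper uses Freiman's theorem (Lemma~\ref{Freiman:thm}) together with iterated Pl\"{u}nnecke--Ruzsa estimates (Lemmata~\ref{gen:lem:2.3} and~\ref{prlin}) to place $X$ inside a generalised arithmetic progression, and then Lemma~\ref{gen:lem:2.4} --- the statement that irreducibility forbids $A$ from concentrating on any affine hyperplane --- forces every side-length of that progression to be at least $|A|^{\sigma}$, which makes each $|\pi_{[d]\setminus\{j\}}(X)|$ small (Proposition~\ref{gen:lem:2.5}). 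The determinant constant, when it is obtained, comes not from an inductive slicing but from applying Lemma~\ref{gen:lem:2.1} with $A_j=\L_{2j-1}(A)+\L_{2j}(A)$ and invoking the Conlon--Lim $k=2$ theorem for each pair.

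The concrete gap in your inductive scheme is in the spread-out regime. Your inductive hypothesis is a statement about $\mathcal{M}_1(C)+\dots+\mathcal{M}_k(C)$ for a \emph{single} set $C$ and a family $\mathcal{M}_1,\dots,\mathcal{M}_k$, but what a level set of $\psi$ on $B$ contains is a sum $\L_1(S_1)+\dots+\L_k(S_k)$ where $S_i=A\cap(\psi\circ\L_i)^{-1}(t_i)$ are slices of $A$ taken with respect to \emph{different} functionals $\psi\circ\L_i$. Unless you force all the $\psi\circ\L_i$ to be proportional --- a codimension-$(k-1)$ condition that a generic $\psi$ will not satisfy, and one that already fails for the simple examples in the paper --- the $S_i$ are unrelated subsets of unrelated hyperplanes, and there is no single $(d-1)$-dimensional set on which the derived transformations act. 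You would need instead a genuinely multi-set $(d-1)$-dimensional estimate, which is exactly Lemma~\ref{gen:lem:2.1}; but at that point the determinants of the $\L_i$ have disappeared from the main term, and the argument has collapsed back into the paper's compression route rather than an induction on $d$. The sentence ``summing the $(d-1)$-dimensional bounds over the level sets of $\psi$ then supplies the extra factor of $k$'' is therefore not justified, and your later claim that the derived family stays irreducible and coprime is not even well-posed, since no single derived family exists. Your concentrated-regime sketch runs into the same issue: the induced maps $\mathcal{M}_i$ have different domains $\L_i^{-1}(\ker\psi)$, so there is no common $(d-1)$-dimensional problem to which to reduce.
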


Note that the analogous result holds in the continuous setting by simply applying $\eqref{bm1}$. Moreover, the hypothesis that $\L_1, \dots, \L_k$ be coprime and irreducible is necessary, see \cite[\S1]{CL2022}.

When $d=1$, Conjecture \ref{cl2} matches \cite[Problem $4$]{Bu2007} and
was resolved by Bukh \cite{Bu2008}. On the other hand, when $d \geq 2$, even the simplest non-trivial case of this conjecture remained open until work of the third author \cite[Theorem 1.3]{Mu2019} and subsequent work of Krachun--Petrov \cite[Theorem 1]{KP2020}.
More recently, in their very nice paper \cite{CL2022}, Conlon--Lim completely resolved the $k=2$ case of Conjecture \ref{cl2}. In particular, their main result implies that for any $d \in \mathbb{N}$ and for any coprime, irreducible $\L_1, \L_2 \in \M_d(\ZZ)$, there exists some constant $\delta = \delta(d, \L_1, \L_2) >0$ such that for any finite $A \subset \RR^d$, one has
\begin{equation} \label{jp1}
|\L_1(A) + \L_2(A)| \geq ( \det (\L_1)^{1/d} + \det (\L_2)^{1/d})^d |A| - O_{d, \L_1, \L_2}(|A|^{1 - \delta})  
\end{equation}

Our main aim in this paper is to present a variety of sharp results in the $k \geq 3$ regime of Conjecture $\ref{cl2}$ and related problems. In this endeavour, letting $\mathbb{F}$ be either $\QQ$ or $\RR$, we denote matrices $\L_1, \dots, \L_k \in \GL_d(\mathbb{F})$ to be \emph{$\mathbb{F}$-irreducible} if there are no non-trivial subspaces $U, V \subseteq \mathbb{F}^d$ of the same dimension such that $\L_i(U) \subseteq V$ for every $1 \leq i \leq k$. With this in hand, we present our first result below. 

\begin{theorem} \label{e1}
Let $\mathbb{F}$ be either $\QQ$ or $\RR$. Given $d,k \in \mathbb{N}$, there exists $\delta = \delta(d,k)>0$ such that for any $\mathbb{F}$-irreducible $\L_1, \dots, \L_k$ in  $\GL_d(\mathbb{F})$ and any finite set $A \subseteq \RR^d$, we have
\begin{equation} \label{js3}
    |\L_1(A) + \dots + \L_k(A)| \geq k^d |A| - O_{d,k}(|A|^{1- \delta}).  
\end{equation}
If $\mathbb{F} = \mathbb{Q}$ and $k$ is even and $\L_{2i-1}, \L_{2i} \in \M_d(\ZZ)$ are coprime and irreducible for every $1 \leq i \leq k/2$, then there exists $\delta' = \delta'(\L_1, \dots, \L_k) > 0$ such that for every $A \subseteq \RR^d$, we have
\begin{equation} \label{lux}
    |\L_1(A) + \dots + \L_k(A)|  \geq ( \det(\L_1)^{1/d} + \dots + \det(\L_k)^{1/d})^d |A| - O_{\L_1, \dots, \L_k}(|A|^{1- \delta'}). 
\end{equation} 
\end{theorem}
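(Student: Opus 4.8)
The plan is to prove the two displays \eqref{js3} and \eqref{lux} by induction on the dimension $d$, reducing the general statement to the case $d=1$ (where one uses Bukh's theorem on $\ZZ$) and to the two-operator case \eqref{jp1} of Conlon--Lim. The base case $d=1$ of \eqref{js3} is the statement that $|\L_1(A)+\dots+\L_k(A)| \ge k|A| - O_k(1)$ for nonzero rationals $\L_i$, which follows from the $d=1$ resolution of Conjecture~\ref{cl2} by Bukh \cite{Bu2008} after clearing denominators and rescaling so that the $\L_i$ become coprime integers; the coprimality then forces the sharp constant to be exactly $k$ unless the $\L_i$ share a common factor, which can be absorbed.

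For the inductive step I would argue as follows. By the irreducibility hypothesis, no single proper subspace is simultaneously ``compatible'' with all the $\L_i$, so one can choose a hyperplane direction $\vec{v}$ such that the images $\L_i(\vec{v})$ point in pairwise non-parallel directions (or at least are spread out enough). Slicing $A$ by hyperplanes orthogonal to a suitable functional, write $A = A_1 \sqcup \dots \sqcup A_m$ as a union of slices lying in parallel hyperplanes. Each slice $A_j$ is a $(d-1)$-dimensional set (after projecting), and the sumset $\L_1(A)+\dots+\L_k(A)$ contains many translates of sums of the form $\L_1(A_{j_1})+\dots+\L_k(A_{j_k})$ lying in parallel hyperplanes; the restrictions of the $\L_i$ to these hyperplanes remain $\mathbb{F}$-irreducible (this is where the ``same dimension'' clause in the definition of irreducibility is used), so the inductive hypothesis gives a lower bound of $k^{d-1}|A_{j_\bullet}|$ on each slice-sum up to a lower-order error. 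Summing over the roughly $km$ distinct hyperplane-levels that arise and optimising, a Brunn--Minkowski / Cauchy--Davenport type convexity argument in the remaining one-dimensional direction upgrades $k^{d-1}$ to $k^{d}$. The error terms accumulate as $\sum_j O_{d,k}(|A_j|^{1-\delta_{d-1}})$, which by concavity is $O_{d,k}(|A|^{1-\delta_d})$ for a suitably shrunk $\delta_d > 0$ depending only on $d,k$; this is why the exponent must be allowed to degrade with the induction.

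For the stronger bound \eqref{lux}: when $k$ is even and the pairs $(\L_{2i-1},\L_{2i})$ are coprime and irreducible integer matrices, I would first group the sum into $k/2$ blocks and apply the Conlon--Lim estimate \eqref{jp1} to each block $\L_{2i-1}(A) + \L_{2i}(A)$, which has size at least $(\det(\L_{2i-1})^{1/d} + \det(\L_{2i})^{1/d})^d|A| - O(|A|^{1-\delta})$. The issue is that the blockwise sumsets are then combined additively, and a naive triangle-type inequality loses the sharp constant; instead one wants a ``$d$-dimensional superadditivity'' statement saying that if $B_1,\dots,B_{k/2} \subset \RR^d$ each have near-extremal structure forced by the Conlon--Lim equality analysis (namely each $B_i$ is close to a $d$-dimensional generalised arithmetic progression of the appropriate volume), then $B_1 + \dots + B_{k/2}$ has size at least $(\sum_i |B_i|^{1/d})^d - (\text{lower order})$, matching discrete Brunn--Minkowski. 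So the main obstacle, and the step I expect to be the most delicate, is establishing this robust discrete Brunn--Minkowski inequality with a stability input — one needs the near-equality characterisation from \cite{CL2022} to conclude that each $B_i$ is structured, and then a quantitative (polynomial-error) version of the Freiman-type / Brunn--Minkowski bound for sums of several structured sets, carefully tracking that the product of the normalising determinants multiplies out to $(\sum \det(\L_j)^{1/d})^d$. The $d=1$ case of this is again Bukh's theorem, and the higher-dimensional case is handled by the same slicing induction as above, now carrying the determinant weights through each slice.
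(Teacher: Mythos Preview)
There is a genuine gap in your inductive slicing argument for \eqref{js3}. The step ``the restrictions of the $\L_i$ to these hyperplanes remain $\mathbb{F}$-irreducible'' cannot be made to work, and in fact irreducibility is precisely what obstructs it. For a $(d-1)$-dimensional inductive hypothesis to apply to slice-sums, you would need all of the $\L_i$ to carry a fixed hyperplane $U$ into a common hyperplane $V$, so that the induced maps $U\to V$ are well-defined and the pieces $\L_1(A_{j_1})+\dots+\L_k(A_{j_k})$ sit inside translates of $V$. But $\L_i(U)\subseteq V$ for all $i$ with $\dim U=\dim V=d-1$ is exactly the reducibility you have assumed does not occur. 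If instead you slice $A$ by level sets of a functional $\phi$, then $\L_i(A_j)$ lies in a level set of $\phi\circ\L_i^{-1}$; these functionals differ across $i$, so the pieces $\L_1(A_{j_1})+\dots+\L_k(A_{j_k})$ are genuinely $d$-dimensional and no lower-dimensional estimate applies. The one-dimensional ``convexity'' step is therefore not separated from the rest of the problem, and the induction does not close.

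The paper avoids induction on $d$ entirely. It proves a discrete Brunn--Minkowski inequality (Lemma~\ref{gen:lem:2.1}) of the form
\[
|A_1+\dots+A_k|\ \ge\ \Big(\sum_{i}|A_i|^{1/d}\Big)^{d}\ -\ \sum_{I\subsetneq[d]}(k-1)^{d-|I|}\,|\pi_I(A_1+\dots+A_k)|,
\]
which reduces both \eqref{js3} and \eqref{lux} to showing that, in a well-chosen basis, the codimension-one projections of $X=\L_1(A)+\dots+\L_k(A)$ have size $O_{d,k}(|A|^{1-\sigma})$. That is Proposition~\ref{gen:lem:2.5}, and its proof is where irreducibility actually enters: assuming $|X|\le K|A|$, an iterated Pl\"unnecke--Ruzsa argument (Lemmata~\ref{gen:lem:2.3} and~\ref{prlin}) followed by Freiman's theorem places a translate of $A$ inside a proper GAP, and the geometric Lemma~\ref{gen:lem:2.4} (irreducibility forces $|A\cap(\vec{x}+U)|\le (K|A|)^{1-2^{-r}}$ for every $r$-dimensional subspace $U$) then shows that every side-length of this GAP is at least a fixed power of $|A|$, so every coordinate projection drops by a power. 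For \eqref{lux} your instinct to pair and apply \eqref{jp1} to each $\L_{2i-1}(A)+\L_{2i}(A)$ is exactly right, but no stability or near-equality characterisation is needed: one feeds $A_j=\L_{2j-1}(A)+\L_{2j}(A)$ directly into Lemma~\ref{gen:lem:2.1} and controls the projection error terms by the same Proposition~\ref{gen:lem:2.5}.
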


In particular, we are able to resolve Conjecture \ref{cl2} in the more specific case when $k$ is even and $\L_{2i-1}, \L_{2i}$ are coprime and irreducible for $1 \leq i \leq k/2$, the latter being a more restrictive condition than just $\L_1,\dots, \L_k$ being coprime and irreducible. 

We further note that the main term in \eqref{js3} is sharp for a variety of cases. For instance, we have the following example. 

\begin{Proposition} \label{sharpexample}
    Let $d\geq 2$, let $\L_1$ be the identity matrix, and for each $2\leq j\leq d$, let $\L_j\in \GL_d(\RR)$ be the linear transformation satisfying 
    \[ \L_j(e_1)=e_j \ \ , \ \ \L_j(e_j)=-e_1 \ \ \text{and} \ \ \L_j(e_i)=e_i \ \  \text{for every} \ \ i\in [d] \setminus \{1,j\}.\]
    Then $\L_1,\L_2,\dots,\L_d$ are $\RR$-irreducible and coprime. Moreover, setting 
    \[ A = \{ \vec{x} \in \ZZ^d : |x_1|, \dots, |x_d| \leq N\},\]
    we have that
    \[ |\L_1(A) + \dots + \L_d(A)| = d^d |A| + O_{d}(|A|^{1 - 1/d}). \]
\end{Proposition}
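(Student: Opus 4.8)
The plan is to treat the two assertions of Proposition~\ref{sharpexample} separately, and both reduce to short computations. For the sumset estimate, the key observation is that for each $2 \le j \le d$ the matrix $\L_j$ is a signed permutation matrix: it rotates the plane spanned by $e_1, e_j$ by a right angle and fixes the remaining coordinate axes. Since $A = \{\vec{x} \in \ZZ^d : |x_i| \le N \text{ for all } i\}$ is invariant under permutations of the coordinates and under sign changes, this gives $\L_j(A) = A$ for every $j$ (for $\L_1$ it is trivial), so that $\L_1(A) + \dots + \L_d(A)$ is simply the $d$-fold sumset $dA$. I would then compute $|dA|$ directly: a coordinate-by-coordinate argument identifies $dA$ with $\{\vec{y} \in \ZZ^d : |y_i| \le dN \text{ for all } i\}$, so $|dA| = (2dN+1)^d$ whereas $d^d|A| = (2dN+d)^d$, and the elementary bound $a^d - b^d \le d(a-b)a^{d-1}$ for $0 \le b \le a$, applied with $a = 2dN+d$ and $b = 2dN+1$, controls the gap by $d(d-1)(2dN+d)^{d-1} = O_d(N^{d-1}) = O_d(|A|^{1-1/d})$, which is exactly the claimed asymptotic.

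For coprimality, I would note that $\L_j$ consists of the rotation block $\left(\begin{smallmatrix} 0 & -1 \\ 1 & 0 \end{smallmatrix}\right)$ together with $1$'s elsewhere on the diagonal, so $\det \L_j = 1$ for every $j$. Hence if $\mathcal{P}, \mathcal{R} \in \GL_d(\QQ)$ were to satisfy $\mathcal{P}\L_j\mathcal{R} \in \M_d(\ZZ)$ for all $j$, then $|\det\mathcal{P}\det\mathcal{R}| = |\det(\mathcal{P}\L_1\mathcal{R})|$ would be a positive integer, hence at least $1$, ruling out $0 < |\det\mathcal{P}\det\mathcal{R}| < 1$.

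For $\RR$-irreducibility, I would argue by contradiction. Given non-trivial $U, V \subseteq \RR^d$ of equal dimension with $\L_i(U) \subseteq V$ for all $i$, invertibility of each $\L_i$ forces $\L_i(U) = V$; taking $i = 1$ gives $U = V$, so $U$ is a common invariant subspace of $\L_2, \dots, \L_d$. Since $\L_j^2$ is the diagonal matrix negating exactly the coordinates $1$ and $j$, every $\vec{u} = (u_1, \dots, u_d) \in U$ satisfies $u_1 e_1 + u_j e_j = \tfrac12(\vec{u} - \L_j^2 \vec{u}) \in U$ for each $2 \le j \le d$. Choosing $\vec{u} \in U \setminus \{0\}$ and splitting on whether $u_1 = 0$ — if $u_1 = 0$ then $e_{j_0} \in U$ for some $j_0$ with $u_{j_0} \neq 0$ and hence $e_1 = -\L_{j_0}(e_{j_0}) \in U$; if $u_1 \neq 0$ then $u_1 e_1 + u_2 e_2$ and $\L_2(u_1 e_1 + u_2 e_2) = -u_2 e_1 + u_1 e_2$ are linearly independent, so again $e_1 \in U$ — one obtains $e_1 \in U$, whence $e_j = \L_j(e_1) \in U$ for all $j$ and $U = \RR^d$, contradicting non-triviality. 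The only step requiring any real care here is this last one, namely that $\L_2, \dots, \L_d$ admit no common proper nonzero invariant subspace; everything else is a direct verification.
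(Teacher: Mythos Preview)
Your treatment of the sumset estimate and of coprimality is essentially identical to the paper's: both note that each $\L_j$ is a signed permutation, hence $\L_j(A)=A$ and the sum is just $dA=\{-dN,\dots,dN\}^d$, and both read off coprimality from $\det\L_j=1$.

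Your irreducibility argument, however, takes a genuinely different route. The paper, after reducing to a common invariant subspace $V$, picks $\vec a\in V\setminus\{0\}$, observes that the orbit of $\vec a$ under the $\L_j$'s lies in some hyperplane $\{c_1x_1+\dots+c_dx_d=0\}$, and then uses specific orbit elements such as $\L_l^2\L_k^2(\vec a)$ to deduce the relations $c_ka_k+c_la_l=0$ for all $k<l$, eventually forcing $c_ka_j=0$ for all $j\neq k$ and reaching a contradiction; this argument needs three distinct indices and so the paper handles $d=2$ separately. Your approach instead exploits directly that $\L_j^2$ is the diagonal reflection negating coordinates $1$ and $j$, so that $\tfrac12(\vec u-\L_j^2\vec u)=u_1e_1+u_je_j\in U$ projects any $\vec u\in U$ onto $\mathrm{span}(e_1,e_j)$, and a short case split (using that $\L_2$ restricted to $\mathrm{span}(e_1,e_2)$ has no real eigenvector) then yields $e_1\in U$ and hence $U=\RR^d$. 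This is correct, more elementary, and treats all $d\geq 2$ uniformly; the paper's orbit/hyperplane argument is slightly more indirect but perhaps generalises more readily to other families of matrices.
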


We now turn to another discrete Brunn--Minkowski result proved by  Grynkiewicz--Serra \cite{GS2010} for sumsets in $\RR^2$. Thus, let $A_1, A_2 \subset \RR^2$ be finite, non-empty sets and let $l$ be a line in $\RR^2$. Moreover, let $r_1$ and $r_2$ be the minimal number of translates of $l$ required to cover $A_1$ and $A_2$ respectively. In this setting, Grynkiewicz--Serra \cite{GS2010} proved that
\begin{equation}  \label{gs2010r} 
    |A_1 + A_2| \geq (|A_1|/r_1 + |A_2|/r_2 -1)(r_1 + r_2 - 1),
\end{equation}
consequently generalising earlier work of Freiman \cite{Fr1973} who proved \eqref{gs2010r} when $A_1 = A_2$. As a straightforward consequence of the methods involved in the proof of Theorem \ref{e1}, we can prove a $k$-fold generalisation of the above result.

\begin{theorem} \label{lem:par} 
Given $k \geq 2$, let
$A_1,\ldots,A_k\subseteq\mathbb{R}^2$ be finite, non-empty sets, and let $l$ be a line in $\RR^2$. For each $1\leq i\leq k$, let $r_i$ be the minimal number of translates of $l$ required to cover $A_i$. Then
\begin{equation} \label{qn5}
|A_1+\dots+A_k|\geq\left(\frac{|A_1|}{r_1}+\dots+\frac{|A_k|}{r_k}-(k-1)\right)(r_1+\dots+r_k-(k-1)) . 
\end{equation}
\end{theorem}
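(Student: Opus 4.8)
The plan is to reduce the $k$-fold inequality \eqref{qn5} to the two-fold Grynkiewicz--Serra bound \eqref{gs2010r} by an induction on $k$, using the obvious compression of the sumset onto lines parallel to $l$. First I would set up the base case $k=2$, which is exactly \eqref{gs2010r}. For the inductive step, suppose the result holds for $k-1$ sets and let $A_1,\dots,A_k$ be given with covering numbers $r_1,\dots,r_k$. Write $B = A_1 + \dots + A_{k-1}$ and let $s$ be the minimal number of translates of $l$ needed to cover $B$. The key geometric observation is that if $A_i$ meets $r_i$ translates of $l$, then $A_1+\dots+A_{k-1}$ meets at least $r_1 + \dots + r_{k-1} - (k-2)$ translates of $l$: indeed, after an affine change of coordinates we may take $l$ to be the $x_1$-axis, so that "number of translates of $l$ meeting a set" is the cardinality of the projection $\pi$ onto the $x_2$-coordinate, and $\pi(A_1+\dots+A_{k-1}) = \pi(A_1) + \dots + \pi(A_{k-1})$ is a sumset of $k-1$ finite subsets of $\RR$, whose size is at least $\sum \pi(A_i) - (k-2) = \sum r_i - (k-2)$ by the one-dimensional Cauchy--Davenport/Freiman bound over $\RR$. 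Hence $s \geq r_1 + \dots + r_{k-1} - (k-2)$.

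Next I would apply the inductive hypothesis to $A_1,\dots,A_{k-1}$ to get
\[
|B| \geq \Big(\frac{|A_1|}{r_1} + \dots + \frac{|A_{k-1}|}{r_{k-1}} - (k-2)\Big)\big(r_1 + \dots + r_{k-1} - (k-2)\big),
\]
and then apply \eqref{gs2010r} to the pair $B, A_k$, whose covering numbers are $s$ and $r_k$:
\[
|A_1 + \dots + A_k| = |B + A_k| \geq \Big(\frac{|B|}{s} + \frac{|A_k|}{r_k} - 1\Big)(s + r_k - 1).
\]
Writing $t := r_1 + \dots + r_{k-1} - (k-2)$, the inductive hypothesis gives $|B|/s \geq (|A_1|/r_1 + \dots + |A_{k-1}|/r_{k-1} - (k-2)) \cdot (t/s)$, and since $s \geq t$ this is at least $|A_1|/r_1 + \dots + |A_{k-1}|/r_{k-1} - (k-2)$ provided the bracketed factor is nonnegative (which holds, e.g. because $|A_i|/r_i \geq 1$ for each $i$, as $r_i$ translates of a line cannot cover more than... — actually one uses $|A_i| \geq r_i$, valid since the projection $\pi(A_i)$ has size $r_i$ and each fibre is nonempty). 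Similarly $s + r_k - 1 \geq t + r_k - 1 = r_1 + \dots + r_k - (k-1)$. Substituting these two lower bounds into the displayed Grynkiewicz--Serra inequality yields exactly \eqref{qn5}, completing the induction.

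The main obstacle is the monotonicity bookkeeping: one must check that replacing the true values $|B|/s$ and $s$ by the smaller quantities $\sum_{i<k}|A_i|/r_i - (k-2)$ and $t$ is legitimate, i.e. that the expression $(\frac{|B|}{s} + \frac{|A_k|}{r_k} - 1)(s + r_k - 1)$ is monotone in the relevant direction as a function of $(|B|, s)$ along the constraint region. This requires that the first bracket be nonnegative (handled via $|A_i| \geq r_i$ and $|A_k| \geq r_k \geq 1$) and a short argument that decreasing $s$ towards $t$ while keeping $|B|/s$ fixed and then using $s \ge t$ only decreases the product — here one should be a little careful to organise the two substitutions in the right order, first bounding $|B|/s$ from below using $s \ge t$ and then bounding the factor $s + r_k - 1$ from below. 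I do not expect any genuinely new idea beyond \eqref{gs2010r} and the one-dimensional sumset bound; the content is entirely in arranging the induction so that these monotonicity inequalities go the right way.
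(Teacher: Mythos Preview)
Your inductive strategy is natural, but the monotonicity step contains a genuine error. You write that the inductive hypothesis gives $|B|/s \geq \alpha \cdot (t/s)$ with $\alpha = \sum_{i<k}|A_i|/r_i - (k-2)$ and $t=\sum_{i<k}r_i-(k-2)$, and then claim that ``since $s \geq t$ this is at least $\alpha$''. But $s \geq t$ means $t/s \leq 1$, so $\alpha \cdot (t/s) \leq \alpha$; the inequality points the wrong way. What you would actually need for your substitution is $|B| \geq \alpha s$, which is strictly stronger than the inductive hypothesis $|B| \geq \alpha t$ and does not follow from it together with $s \geq t$. Without this, plugging lower bounds into the Grynkiewicz--Serra inequality for the pair $(B,A_k)$ does not yield \eqref{qn5}: the function $s \mapsto (|B|/s + \beta)(s + r_k - 1)$ is not monotone in $s$, and under the sole constraint $|B|\geq \alpha t$ its value can fall below $(\alpha+\beta)(t+r_k-1)$ for $s$ in the relevant range.

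The paper circumvents exactly this difficulty by inserting a compression step \emph{before} inducting. After rotating so that $l$ is parallel to $e_2$, one compresses each $A_i$ first with respect to $e_2$ and then with respect to $e_1$; by Lemma~\ref{compression} this can only decrease $|A_1+\dots+A_k|$, while $|A_i|$ and $r_i=|\pi(A_i)|$ are preserved. After this reduction every $A_i$ is a down set, so $\pi(A_i)=\{0,1,\dots,r_i-1\}$ and hence the projection of any partial sumset has size \emph{exactly} the corresponding sum of $r_i$'s minus the overcount; in particular the covering number of the grouped set equals $t$ on the nose, not merely $\geq t$. With $s=t$ the bookkeeping you attempted becomes trivial, and the induction (the paper groups $A_{k-1}+A_k$ and applies the hypothesis to $A_1,\dots,A_{k-2},A_{k-1}+A_k$, but your grouping works equally well once $s=t$) closes immediately. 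The compression is the missing ingredient in your proposal.
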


We firstly note that \eqref{qn5} is sharp. Indeed, 
let $n_1,\dots,n_k,m_1,\dots,m_k$ be positive integers and let $A_i=\{1,2,\dots,n_i\}\times\{1,2,\dots,m_i\}$ for every $1\leq i\leq k$. Let $l$ be a line parallel to the $x$-axis, so that $r_i=m_i$ and $|A_i|/r_i=n_i$ for every $1\leq i\leq k$. In this case, one sees that 
$$A_1+\dots+A_k=\Big\{k,k+1,\dots,\sum_{i=1}^k n_i\Big\}\times\Big\{k,k+1,\dots,\sum_{i=1}^k m_i\Big\},$$
and therefore,
$$|A_1+\dots+A_k|=\Big(\sum_{i=1}^kn_i-(k-1)\Big)\Big(\sum_{i=1}^km_i-(k-1)\Big).$$
This example shows that equality may hold in \eqref{qn5}, and so, the bound is sharp.

We now return to our earlier example in \eqref{ex10} and note that
\[    |kA_N| 
    = \binom{k+d-1}{d} |A_N| - (k-1)\binom{k+d-1}{d-1},\]
    see for instance Lemma \ref{comp}. This is known to be optimal when $k=2$. Indeed, the well-known Freiman's lemma states that for any finite, non-empty set $A \subseteq \mathbb{R}^d$ with $\dim(A) = d$, one has 
\[ |2A| \geq (d+1)|A| - d(d+1)/2.\]
Combining our methods along with some ideas of Gardner--Gronchi \cite{GG2001}, we can further confirm that the above example is optimal for every $k \geq 3$.

\begin{theorem} \label{manysums}
    Let $d,k$ be positive integers, let $A \subseteq \RR^d$ be a finite, non-empty set with $\dim(A) = d$. Then
\[ |kA| \geq  \binom{k+d-1}{d} |A| - (k-1)\binom{k+d-1}{d-1}.\]
\end{theorem}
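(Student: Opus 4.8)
The plan is to reduce the $k$-fold estimate to a compression argument in the spirit of Gardner--Gronchi, after first disposing of the one-dimensional core of the inequality. First I would establish the ``base'' count: if $A$ contains $d+1$ affinely independent points, then after an invertible affine transformation (which changes neither $|A|$ nor $|kA|$) we may assume $\{0, e_1, \dots, e_d\} \subseteq A$, and hence $kA \supseteq k\{0,e_1,\dots,e_d\}$, whose size is exactly $\binom{k+d-1}{d}$ by Lemma \ref{comp}. This handles the leading term but not the lower-order correction, so the real work is to recover the full bound $\binom{k+d-1}{d}|A| - (k-1)\binom{k+d-1}{d-1}$.

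The main step is an induction on $d$ via a compression/slicing argument. I would pick a generic hyperplane direction (say, using the last coordinate after a suitable linear change of variables so that $A$ has at least two distinct values in that coordinate), and write $A = \bigsqcup_{t} A_t$ where $A_t$ is the slice of $A$ at height $t$, with the heights $t_1 < \dots < t_m$ (so $m \geq 2$). Each nonempty slice $A_t$ lives in a copy of $\RR^{d-1}$; let $d_t = \dim(A_t) \le d-1$. The $k$-fold sumset $kA$ then decomposes along heights $t_{i_1} + \dots + t_{i_k}$, and on each such height the relevant contribution contains a sumset of the form $A_{t_{i_1}} + \dots + A_{t_{i_k}}$ inside $\RR^{d-1}$. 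Applying the inductive hypothesis (in dimension $d-1$, or lower-dimensional Freiman-type bounds when $d_t < d-1$, which are themselves covered by induction plus the trivial bound $|B_1 + \dots + B_k| \ge |B_1| + \dots + |B_k| - (k-1)$ in dimension $\le 1$) to these sub-sumsets and summing over all attainable heights $t_{i_1} + \dots + t_{i_k}$ should give the claimed bound, provided the bookkeeping of how many height-sums are available is done correctly --- this is where the $\binom{k+d-1}{d-1}$ term and the factor $(k-1)$ enter, mirroring the computation in Lemma \ref{comp}.

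The delicate point, and the reason Gardner--Gronchi-style compressions are needed rather than a naive slicing, is that the slices $A_t$ can have wildly different sizes and dimensions, so a direct sum over slices of the inductive bound is lossy. The fix is to first \emph{compress} $A$: replace it by a set $A'$ with $|A'| = |A|$, $\dim(A') = d$, $|kA'| \le |kA|$, and with all nonempty slices nested (each $A'_{t_{i}}$ a ``canonical'' lower-dimensional configuration, e.g.\ an initial segment in a simplicial order) so that sums of slices behave additively. Verifying that such a compression exists and does not increase $|kA|$ --- i.e.\ that the relevant operation is monotone under $k$-fold addition --- is the main obstacle; for $k=2$ this is classical, and the content here is checking that the same extremal configurations remain optimal for all $k$. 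Once $A$ is in compressed form, the slice count becomes exact and the inequality follows by the induction together with the explicit evaluation in Lemma \ref{comp}. Finally, I would check the equality case is exactly $A_N$ from \eqref{ex10} (up to affine maps), confirming sharpness as already asserted in the text.
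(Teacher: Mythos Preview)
Your proposal is in the right spirit but takes a detour that the paper avoids entirely. The paper's proof is much more direct and involves no induction on $d$ in the main argument: after reducing to $A\subset\ZZ^d$ with $\{0,e_1,\dots,e_d\}\subseteq A$ (this requires a Freiman-isomorphism argument via a maximal $\QQ$-independent subset and a base-$X$ projection, not merely an affine map as you suggest, since the target must be $\ZZ^d$), the paper simply quotes Gardner--Gronchi's Lemma~3.7 (recorded here as Lemma~\ref{gardnergronchi}) as a black box. That lemma says there is a finite sequence of compressions $\mathcal{C}_{H_l,\vec{v}_l}\circ\cdots\circ\mathcal{C}_{H_1,\vec{v}_1}$ taking $A$ all the way to the long simplex $A_{d,N}$. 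Lemma~\ref{compression}, already proved in the paper for arbitrary $k$, gives $|kA|\ge|k\mathcal{C}_{H,\vec{v}}(A)|$ at each step, so $|kA|\ge|kA_{d,N}|$, and Lemma~\ref{comp} evaluates $|kA_{d,N}|$ exactly. That is the whole proof.

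What you call ``the main obstacle'' --- that compressions do not increase $|kA|$ --- is in fact the easy part: it is Lemma~\ref{compression}, an immediate consequence of \eqref{cdav} applied fibrewise, and it holds for all $k$ with no extra work beyond the $k=2$ case. The genuinely nontrivial ingredient is the \emph{existence} of a compression sequence terminating at the single extremal set $A_{d,N}$, and this is precisely what Gardner--Gronchi supply. Your slicing-plus-induction-on-$d$ scheme, with its attendant worries about low-dimensional slices and bookkeeping of height sums, is effectively an attempt to reprove Lemma~\ref{gardnergronchi} from scratch; it could perhaps be made to work, but it is both harder and unnecessary given that the lemma is already available. The induction on $d$ in the paper appears only inside the explicit computation of $|kA_{d,N}|$ in Lemma~\ref{comp}, not in the reduction step.
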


Our result provides sharp lower bounds even in the regime when $|A|,d$ are fixed and $k$ becomes large, wherein, the lower bound maybe rewritten as
\[ |kA| \geq {k+d-1 \choose d-1} \Bigg(\frac{k(|A|-d)}{d} + 1\Bigg) . \]
Since $\dim(A) =d$, we have that $|A| \geq d+1$, whence the right hand side above looks like $Q_A(k)$, where $Q_A$ is a polynomial of degree $d$ with rational coefficients that depend on $d$ and $|A|$. This is reminiscent of a classical result of Khovanski\u{\i} \cite{Kh1992} which implies that once $k$ is sufficiently large, then $|kA| = P_A(k)$, where $P_A \in \mathbb{Q}[x]$ is some polynomial of degree at most $d$. These polynomials can be quite difficult to describe explicitly and there has been quite a lot of interest recently in estimating the threshold value of $k$ for which $|kA| = P_A(k)$, see \cite{GSW2023} and the references therein. Our result implies that $P_A(k) \geq Q_A(k)$ for $k$ sufficiently large, and that equality holds whenever $A = A_N$.

Our proofs of Theorems \ref{e1}, \ref{lem:par} and \ref{manysums} build on an assortment of ideas from \cite{CL2022, GT2006}, \cite{GS2010} and \cite{GG2001} respectively. In particular, these involve an amalgamation of additive combinatorial results such as Freiman's theorem and Pl\"{u}nnecke--Ruzsa type sumset inequalities, see \S2 for further details, along with a key ingredient in the form of the the technique of compressions. The latter has been widely employed in the literature to analyse sumsets in higher dimensions, see for instance \cite{BL1989, CL2021, CL2022, GG2001, GT2006, GS2010, Mu2022}. In \S3, we will give a brief outline of this technique and then utilise it to prove a variety of sumset inequalities for $k$-fold sumsets in $\RR^d$. The latter consists of Theorem \ref{lem:par} as well as $k$-fold analogues of the discrete Brunn--Minkowski inequality proven by Conlon--Lim \cite[Lemma 2.1]{CL2022}. We devote \S4 to presenting some further combinatorial and geometric maneuvers required for the proof of Theorem \ref{e1}, the latter being recorded in \S5. We utilise \S6 to record the proof of Proposition \ref{sharpexample}. Finally, we use \S7 to present the proof of Theorem \ref{manysums}, following closely some ideas of Gardner--Gronchi \cite{GG2001}.

\textbf{Notation.} We will use Vinogradov notation, that is, we write $X \gg_{z} Y$, or equivalently $Y \ll_{z} X$, to mean $|Y| \leq C_z X$ where $C_z>0$ is some constant depending on the parameter $z$. For any finite set $X$ and any $k \in \mathbb{N}$, we write $X^k = \{(x_1, \dots, x_k) : x_1, \dots, x_k \in X\}$. Given $d \in \mathbb{N}$ and $x \in \mathbb{R}$, we will use $\vec{v}$ to denote the vector $(v_1, \dots, v_d) \in \RR^d$ and write $x \cdot \vec{v} = (xv_1, \dots, xv_d)$. Given a positive integer $N$, we use $[N]$ to denote the set $\{1,2,\dots, N\}$.

\textbf{Acknowledgements.} We would like to thank Hao Huang and Jeck Lim for helpful comments.


\section{Additive combinatorics preliminaries}

In this section, we gather some preliminary results from additive combinatorics that we will employ
throughout our paper. We begin by noting that for any $d, k \geq 1$ and any finite, non-empty sets $A_1, \dots, A_k \subset \RR^d$, one has
\begin{equation} \label{cdav}
    |A_1 + \dots + A_k| \geq |A_1| + \dots + |A_k| - (k-1).
\end{equation} 
Moreover, this is sharp if one takes $A_1, \dots, A_k$ to be arithmetic progressions with the same common difference.

Our next set of preliminary lemmas concern inverse result on sumsets, that is, these characterise finite sets $A \subset \RR^d$ which satisfy $|A+A| \leq K|A|$ for some fixed constant $K \geq 1$. This is the subject matter of a classic result in additive combinatorics known as Freiman's theorem. In this endeavour, letting $\FF = \QQ$ or $\RR$, we define a \emph{generalised arithmetic progression} $P$ in $\FF^d$ of \emph{additive dimension} $D$ and size $L$ to be a set of the form 
\[ P = \{ \vec{u}_0 + l_1 \cdot \vec{v}_1 + \dots + l_D \cdot \vec{v}_D \  :  \ l_i \in [L_i]\ (1 \leq i \leq D) \},  \]
where $\vec{v}_0,\dots, \vec{v}_D$ are elements of $\FF^d$ and $L_1\dots L_D = L$. We denote $P$ to be \emph{proper} if $|P| = L$. We remark that the additive dimension of $P$ may not be the same as $\dim(P)$.

Note that $P$ has a small sumset, that is, $|P+P| \leq 2^D |P|$. Freiman's theorem \cite{GR2007} roughly states that these are essentially the only such examples.

\begin{lemma} \label{Freiman:thm}
Let $\FF = \QQ$ or $\RR$. For any $K \geq 1$, there exist constants $C_1, C_2 >0$ such that if $A \subset \FF^d$ is a finite, non-empty set with $|A+A|\leq K|A|$, then $A$ is contained in a proper generalised arithmetic progression in $\FF^d$ of arithmetic dimension $s \leq C_1$ and size $L\leq C_2|A|$.
\end{lemma}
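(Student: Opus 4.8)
The plan is to deduce this statement from the classical form of Freiman's theorem over the integers, in the version that is valid in $\ZZ^n$ for every $n$ with constants depending only on the doubling constant $K$; this is exactly what is proved (indeed, for arbitrary abelian groups) in \cite{GR2007}. The only work is then to pass between $\FF^d$ and a suitable $\ZZ^n$ without losing control of the generalised arithmetic progression structure. After translating, we may assume $\vec{0}\in A$. Since $A$ is finite, the subgroup $G=\langle A\rangle\leq (\FF^d,+)$ is finitely generated, and it is torsion-free because it lies inside $\FF^d$; hence $G\cong\ZZ^n$ for some $n\in\NN$. Fix a group isomorphism $\psi\colon G\to\ZZ^n$. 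As $\psi$ is an additive bijection, $\psi(A)\subseteq\ZZ^n$ is finite and non-empty with $|\psi(A)+\psi(A)|=|A+A|\leq K|A|=K|\psi(A)|$.

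Next, apply Freiman's theorem in $\ZZ^n$ with this doubling: there are constants $C_1,C_2>0$ depending only on $K$ and a proper generalised arithmetic progression $Q\subseteq\ZZ^n$ of additive dimension $s\leq C_1$ and size $|Q|\leq C_2|\psi(A)|=C_2|A|$ such that $\psi(A)\subseteq Q$. Writing $Q=\{\vec{w}_0+l_1\cdot\vec{w}_1+\dots+l_s\cdot\vec{w}_s:\ l_i\in[L_i]\ (1\leq i\leq s)\}$ with $L_1\cdots L_s=|Q|$, properness of $Q$ is precisely the statement that this parametrisation is injective on $[L_1]\times\dots\times[L_s]$.

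Finally, transport back: set $P:=\psi^{-1}(Q)=\{\psi^{-1}(\vec{w}_0)+l_1\cdot\psi^{-1}(\vec{w}_1)+\dots+l_s\cdot\psi^{-1}(\vec{w}_s):\ l_i\in[L_i]\}\subseteq G\subseteq\FF^d$. This is a generalised arithmetic progression of additive dimension $s\leq C_1$ with the same side lengths $L_1,\dots,L_s$, and since $\psi^{-1}$ is injective it remains proper, so $|P|=|Q|\leq C_2|A|$. As $A=\psi^{-1}(\psi(A))\subseteq\psi^{-1}(Q)=P$, and undoing the initial translation replaces $P$ by a translate of itself (still a proper generalised arithmetic progression of the same dimension and size), the lemma follows. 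The genuine mathematical content is entirely contained in the cited integer (equivalently, abelian group) case of Freiman's theorem with $K$-dependent bounds that are uniform in the ambient dimension $n$; the remaining steps are bookkeeping about how generalised arithmetic progressions and their properness behave under group isomorphisms and translations, so there is no substantial obstacle here.
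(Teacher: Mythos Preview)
Your proposal is correct. Note, however, that the paper does not actually give a proof of this lemma: it simply records it as a known result, citing \cite{GR2007}. Your argument is a correct and clean way to extract the $\FF^d$ statement from that reference, so in that sense you are doing exactly what the paper implicitly does. One minor remark: the detour through $G\cong\ZZ^n$ is not strictly needed, since the Green--Ruzsa theorem in \cite{GR2007} applies directly to any abelian group, and in a torsion-free group such as $(\FF^d,+)$ the coset progression they produce has trivial subgroup part and is therefore already a proper generalised arithmetic progression. Either route is fine.
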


We conclude this section by presenting various sumset inequalities which hold over arbitrary abelian groups. We start with the well-known Pl\"{u}nnecke--Ruzsa inequality, see \cite{Pe2012} for a streamlined proof.

\begin{lemma} \label{pr}
    Let $A,B$ be finite, non-empty subsets of some abelian group $G$ such that $|A+B|\leq K|B|$ for some $K \geq 1$. Then for every $m,n \in \mathbb{N} \cup \{0\}$, one has
    \[ |mA - nA| \leq K^{m+n}|B|. \]
\end{lemma}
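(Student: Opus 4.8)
The plan is to recall the standard short argument for this inequality (see \cite{Pe2012}), which I sketch for completeness; it combines a covering-type estimate of Petridis with the Ruzsa triangle inequality and requires no input specific to our setting. First I would fix a non-empty subset $X\subseteq B$ minimising the ratio $|A+X|/|X|$ and write $K':=|A+X|/|X|$, noting that $K'\le K$ since $X=B$ is a competitor. The heart of the matter is the following inequality of Petridis:
\[ |A+X+Z|\le K'\,|X+Z|\qquad\text{for every finite }Z\subseteq G, \]
which I would prove by induction on $|Z|$. The case $|Z|=1$ is the defining equality $|A+X|=K'|X|$. For the inductive step one writes $Z=Z'\cup\{z\}$ with $z\notin Z'$ and sets $W:=\{x\in X : x+z\in X+Z'\}\subseteq X$; then $|X+Z|=|X+Z'|+|X|-|W|$ by inclusion--exclusion, while $A+W+z\subseteq(A+X+Z')\cap(A+X+z)$ gives $|A+X+Z|\le|A+X+Z'|+|A+X|-|A+W|$. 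One then closes the induction using the inductive hypothesis for $Z'$, the equality $|A+X|=K'|X|$, and the bound $|A+W|\ge K'|W|$ furnished by the minimality of $X$ (valid for all $W\subseteq B$, and trivial when $W=\emptyset$).

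Granting Petridis's inequality, the remainder is routine bookkeeping. Iterating it with $Z=mA$ gives, by induction on $m$, that $|mA+X|\le (K')^{m}|X|$ for all $m\ge 0$. I would then apply the Ruzsa triangle inequality $|U-W|\,|V|\le|U-V|\,|V-W|$ — valid for all finite non-empty $U,V,W\subseteq G$, via the explicit injection $(U-W)\times V\hookrightarrow(U-V)\times(V-W)$ — with $U=mA$, $W=nA$ and $V=-X$. Using $|{-X}|=|X|$, $|mA-(-X)|=|mA+X|$ and $|(-X)-nA|=|X+nA|$, this yields
\[ |mA-nA|\ \le\ \frac{|mA+X|\,|nA+X|}{|X|}\ \le\ \frac{(K')^{m}|X|\cdot(K')^{n}|X|}{|X|}\ =\ (K')^{m+n}|X|\ \le\ K^{m+n}|B|, \]
where the last step uses $K'\le K$ and $|X|\le|B|$; this is exactly the claimed bound.

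The one step that needs care is the inductive step of Petridis's inequality: the decomposition of $A+X+Z$ must be arranged so that the minimality of $X$ yields a \emph{lower} bound $|(A+X+Z')\cap(A+X+z)|\ge|A+W|\ge K'|W|$ on the overlap, rather than the essentially useless upper bound one gets from a plain union bound. Everything else — the iteration passing from $X$ to $mA+X$, the Ruzsa triangle inequality, and the final comparison of $K'$ with $K$ — is entirely standard.
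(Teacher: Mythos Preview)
Your argument is correct and is precisely the Petridis proof that the paper has in mind; in fact the paper does not give its own proof of this lemma but simply records it as the well-known Pl\"{u}nnecke--Ruzsa inequality and points to \cite{Pe2012} for exactly the argument you have written out. There is nothing to compare: your write-up is a faithful and accurate rendition of that reference, including the key point you flag about using the minimality of $X$ to lower-bound the overlap in the inductive step.
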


Finally, we record the additive version of the so-called Ruzsa's triangle inequality. In particular, given any finite, non-empty subsets $U,V,W$ of some abelian group $G$, one has
\begin{equation}\label{sumRTI}
|U||V+W|\leq|V+U||U+W|.
\end{equation}
This can be deduced swiftly from \cite[Proposition 2.1]{Pe2012}.


\section{Compressions}

We will utilise this section to give a brief introduction about the technique of compressions as well as develop the various necessary compression lemmas required in the proofs of Theorems \ref{e1}, \ref{lem:par} and \ref{manysums}. Thus, let $H$ be a hyperplane in $\mathbb{R}^d$ and let $\vec{v}\in\mathbb{R}^d$ be a vector not parallel to $H$. For any $\vec{u} \in H$ and any finite, non-empty set $A \subset \RR^d$, we define
\[ A_{\vec{u}} = \{ \vec{a} \in A \ : \ \vec{a} + x \cdot \vec{v} = \vec{u} \ \ \text{for some} \ \ x \in \mathbb{R}  \}. \]
We now define the \emph{compression $\mathcal{C}_{H,\vec{v}}(A)$} of $A$ onto $H$ with respect to $\vec{v}$ to be the set satisfying
\[  \mathcal{C}_{H,\vec{v}}(A)_{\vec{u}} = \{ \vec{u} + n \cdot \vec{v} \ : \ n \in \{0,1,\dots, |A_{\vec{u}}|-1\}  \}  \]
for every $\vec{u} \in H$.  Whenever the hyperplane $H$ is clear from context, we will write $\mathcal{C}_{\vec{v}}(A)$.
Moreover, a finite set $A\subseteq \RR^d$ is \emph{(H,$\vec{v}$)-compressed} if $\mathcal{C}_{H,\vec{v}}(A) = A$. Finally, we denote a set $A \subseteq \RR^d$ to be a \emph{down set} if $A = \mathcal{C}_{H_i,e_i}(A)$ for every $1\leq i \leq d$, where $\{e_1,\ldots,e_d\}$ denotes the standard basis of $\RR^d$ and $H_i=\{\vec{x}\in\mathbb{R}^d:x_i=0\}$.
\par

It is clear from the definition that $|\mathcal{C}_{H,\vec{v}}(A)|=|A|$ for every pair $(H,\vec{v})$. The key property that we will employ throughout our proofs is that sumsets do not increase in size under compressions.

\begin{lemma}\label{compression}
Let $k\in\mathbb{N}$, and let $A_1,\ldots,A_k \subset \RR^d$ be finite, non-empty sets, let $H\subseteq\mathbb{R}$ be some hyperplane containing $\vec{0}$ and let $\vec{v}\in\mathbb{R}^d$ be a vector not parallel to $H$. Then we have that
\[ \mathcal{C}_{\vec{v}}(A_1 + \dots + A_k) \supseteq \mathcal{C}_{\vec{v}}(A_1) + \dots + \mathcal{C}_{\vec{v}}(A_k).\]
In particular, we get that
\[ |A_1 + \dots + A_k| \geq |\mathcal{C}_{\vec{v}}(A_1) + \dots + \mathcal{C}_{\vec{v}}(A_k) |. \]

\end{lemma}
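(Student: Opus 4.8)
The plan is to reduce the $k$-fold statement to iterated applications of a single observation about one coordinate direction. First I would fix a hyperplane $H\ni\vec 0$ and a vector $\vec v$ not parallel to $H$, so that every $\vec x\in\RR^d$ has a unique representation $\vec x=\pi(\vec x)+t(\vec x)\cdot\vec v$ with $\pi(\vec x)\in H$ and $t(\vec x)\in\RR$; moreover, since $H$ is a subspace and $\vec v\notin H$, the map $\pi$ is linear and $t$ is linear. The containment $\mathcal C_{\vec v}(A_1+\dots+A_k)\supseteq \mathcal C_{\vec v}(A_1)+\dots+\mathcal C_{\vec v}(A_k)$ is equivalent, fibre by fibre over each $\vec u\in H$, to the inequality
\[
\big|\big(\mathcal C_{\vec v}(A_1)+\dots+\mathcal C_{\vec v}(A_k)\big)_{\vec u}\big|\ \le\ \big|(A_1+\dots+A_k)_{\vec u}\big|,
\]
because the compression of the left-hand sumset on fibre $\vec u$ is, by definition, the set of the first $|(A_1+\dots+A_k)_{\vec u}|$ lattice points $\vec u+n\cdot\vec v$, while $\mathcal C_{\vec v}(A_1)+\dots+\mathcal C_{\vec v}(A_k)$ is already a union of consecutive runs $\vec u+\{0,1,\dots\}\cdot\vec v$ on each fibre (since each summand is), so it is contained in that run of length $|(A_1+\dots+A_k)_{\vec u}|$ exactly when it has at most that many points.

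Next I would prove the fibrewise inequality. Fix $\vec u\in H$. Write $B_i:=\mathcal C_{\vec v}(A_i)$ and let $H_{\vec w}$ denote, for $\vec w\in H$, the fibre of $A_i$ over $\vec w$, so $|(A_i)_{\vec w}|=:f_i(\vec w)$ and by construction $(B_i)_{\vec w}=\{\vec w+n\cdot\vec v:0\le n<f_i(\vec w)\}$. A point of $B_1+\dots+B_k$ lying on the fibre over $\vec u$ has the form $\sum_i(\vec w_i+n_i\cdot\vec v)$ with $\sum_i\vec w_i=\vec u$ (using linearity of $\pi$) and $0\le n_i<f_i(\vec w_i)$; the number of such points is at most the number of attainable values of $n_1+\dots+n_k$, which is $\max_{\sum\vec w_i=\vec u}\big(\sum_i(f_i(\vec w_i)-1)\big)+1$ over tuples with all $f_i(\vec w_i)\ge1$. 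On the other hand, choosing the maximising tuple $(\vec w_1,\dots,\vec w_k)$, pick for each $i$ an actual point $\vec a_i\in A_i$ on fibre $\vec w_i$ with $t(\vec a_i)$ maximal and another with $t(\vec a_i)$ minimal among the $f_i(\vec w_i)$ points of $(A_i)_{\vec w_i}$; the sums obtained by taking, in each coordinate $i$, one of these $f_i(\vec w_i)$ distinct real values of $t(\vec a_i)$ give at least $\sum_i(f_i(\vec w_i)-1)+1$ distinct values of $\sum_i t(\vec a_i)$ (a one-dimensional sumset lower bound, i.e.\ \eqref{cdav} applied to the $k$ sets $\{t(\vec a):\vec a\in(A_i)_{\vec w_i}\}\subset\RR$), all on the fibre of $A_1+\dots+A_k$ over $\vec u$. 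Hence $|(A_1+\dots+A_k)_{\vec u}|\ge\sum_i(f_i(\vec w_i)-1)+1\ge|(B_1+\dots+B_k)_{\vec u}|$, which is the desired inequality. Summing over $\vec u\in H$ and using $|\mathcal C_{\vec v}(X)|=|X|$ gives the ``in particular'' clause.

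The main obstacle, and the only place needing care, is the bookkeeping in the fibrewise step: one must check that the relevant quantities really only depend on the $H$-components $\vec w_i$ (which is where linearity of $\pi$ is used) and that the one-dimensional count is tight enough — it suffices to invoke \eqref{cdav} in $\RR$ rather than prove anything new. A secondary subtlety is the degenerate case where some fibre $(A_i)_{\vec w_i}$ is empty for every admissible decomposition of a given $\vec u$; then the fibre of the sumset over $\vec u$ may be empty on both sides and the inequality holds trivially, so one should phrase the max over nonempty tuples with the convention that the max is $-1$ (giving $0$ points) when no such tuple exists. I would structure the write-up as: (i) set up coordinates and reduce to the fibrewise claim; (ii) prove the fibrewise claim via the one-dimensional estimate; (iii) conclude.
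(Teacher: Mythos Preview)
Your proposal is correct and follows essentially the same approach as the paper: a fibrewise argument over $H$, reducing the containment on each fibre to the one-dimensional sumset bound \eqref{cdav}. The only difference is organisational --- the paper first reduces to $k=2$ by induction and then runs the fibrewise computation for two sets, whereas you apply \eqref{cdav} directly to $k$ sets in one pass; neither route is shorter or cleaner than the other.
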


\begin{proof}
Note that the case $k=1$ is trivial. We will prove this for the case when $k=2$, since the general case follows inductively. Indeed, supposing we have proved Lemma \ref{compression} for every $2 \leq k \leq k_0$, then it follows that
\begin{align*}
    \mathcal{C}_{\vec{v}}(A_1+\ldots+A_{k_0+1})
& \supseteq
\mathcal{C}_{\vec{v}}(A_1 + \dots + A_{k_0}) + \mathcal{C}_{\vec{v}}(A_{k_0 + 1}) \\
& \supseteq
\mathcal{C}_{\vec{v}}(A_1) + \dots + \mathcal{C}_{\vec{v}}(A_{k_0}) + \mathcal{C}_{\vec{v}}(A_{k_0 + 1}),
\end{align*} 
where the above inclusions follow from the $k=2$ and $k = k_0$ case of Lemma \ref{compression} respectively. 

Thus, we focus on the $k=2$ case, and so, let $A,B$ be finite non-empty subsets of $\mathbb{R}^d$. For every $\vec{z} \in H$, note that
\begin{align*}
   C_{\vec{v}}(A+ B)_{\vec{z}}  
   & \supseteq \cup_{\substack{\vec{x}, \vec{y} \in H \\ \vec{x} + \vec{y} = \vec{z} }} \{ \vec{z} + n \cdot \vec{v} : 0 \leq n \leq  |A_{\vec{x}}+ B_{\vec{y}}| -1 \} \\
   & \supseteq \cup_{\substack{\vec{x}, \vec{y} \in H \\ \vec{x} + \vec{y} = \vec{z} }}  \{ \vec{z} + n \cdot \vec{v} : 0 \leq n \leq  |A_{\vec{x}}|+ |B_{\vec{y}}| -2 \} \\
   & =  \cup_{\substack{\vec{x}, \vec{y} \in H \\ \vec{x} + \vec{y} = \vec{z} }}  ( \{ \vec{x} + m_1 \cdot \vec{v} : 0 \leq m_1 \leq  |A_{\vec{x}}|-1 \} + \{ \vec{y} + m_2 \cdot \vec{v} : 0 \leq m_2 \leq  |B_{\vec{y}}|-1 \} ) \\
   & = (C_{\vec{v}}(A) +C_{\vec{v}}(B))_{\vec{z}}, 
\end{align*}
where the second inclusion follows from \eqref{cdav}. Taking a union over all $\vec{z} \in H$ on both sides then delivers the desired result.
\end{proof}

If we drop the condition that $\vec{0}\in H$, then the inequality in the conclusion of Lemma \ref{compression} still  holds since 
\[ \mathcal{C}_{H+\mu \cdot \vec{v},\vec{v}}(A)=\mathcal{C}_{H,\vec{v}}(A)+\mu \cdot \vec{v} \ \ \ \text{holds for any }\mu\in\RR.  \]

As a first application of this technique, we now present the proof of Theorem \ref{lem:par}.

\begin{proof}[Proof of Theorem \ref{lem:par}]
We prove this result by induction on $k$. Our base case is $k=2$ which follows from \eqref{gs2010r}, and so, we may assume that $k \geq 3$ and proceed with the inductive step. 

Let $\{e_1,e_2\}$ be the standard basis of $\mathbb{R}^2$. Since the conclusion is invariant under rotations, we may assume that $l$ is parallel to the $e_2$. We first claim that it suffices to consider the case when $A_1, \dots, A_k$ are down sets. In order to see this, write $\pi:\mathbb{R}^2\to\mathbb{R}$ to be the projection map satisfying $\pi(\vec{x}) = x_1$ for every $\vec{x} \in \RR^2$, and note that $r_i = |\pi(A_i)|$ for every $1 \leq i \leq k$. Setting $H$ to be the subspace spanned by $e_1$, we observe that $\pi(A_i)=\pi(\mathcal{C}_{H_2,e_2}(A_i))$ for every $1 \leq i \leq k$. Applying Lemma \ref{compression}, we get that
\[ |A_1+\dots+A_k|\geq|\mathcal{C}_{H,e_2}(A_1)+\dots+\mathcal{C}_{H,e_2}(A_k)|, \]
whence it suffices to assume that $A_1, \dots, A_k$ are $(H,e_2)$--compressed. Let $H'$ be the subspace spanned by $e_2$. We will now compress $A_1, \dots, A_k$ onto $H'$ with respect to $e_2$. For every $1 \leq i \leq k$, since the set $A_i$ was $(H_2, e_2)$--compressed, we have that $|\pi(A_i)| = |\pi(\mathcal{C}_{H',e_1}(A_i))|$. Combining this observation with Lemma \ref{compression}, we may further assume that the sets $A_1, \dots, A_k$ are also $(H',e_1)$--compressed. In particular, the sets $A_1, \dots, A_k$ are down sets. 

Since $A_{k-1}$ and $A_{k}$ are down sets, we have that $|\pi(A_{k-1} + A_{k})| = r_{k-1} + r_{k} - 1$. We may apply the inductive hypothesis for sets $A_1, \dots, A_{k-2}, A_{k-1} + A_k$ to deduce that
\[ |A_1+\dots+A_k| \geq\left(\frac{|A_1|}{r_1}+\dots+\frac{|A_{k-2}|}{r_{k-2}}+\frac{|A_{k-1}+A_{k}|}{r_{k-1}+r_k-1}-(k-2)\right)(r_1 + \dots + r_k - k+1).  \]
Applying the base case for the sets $A_{k-1}$ and $A_k$ gives us 
\[ \frac{|A_{k-1}+A_{k}|}{r_{k-1}+r_k-1} \geq \frac{|A_{k-1}|}{r_{k-1}}+\frac{|A_k|}{r_k}-1, \]
which may now be substituted into the preceding estimate to deliver the desired bound. This concludes the inductive step, and so, we finish the proof of Theorem \ref{lem:par}.
\end{proof}

For the rest of this section, we will use some specific notation in order to simplify the exposition. In particular, for every $1 \leq i \leq d$, we will use $\mathcal{C}_i$ to denote the compression operator $\mathcal{C}_{H_i, e_i}$, where $\{e_1, \dots, e_d\}$ is the canonical basis of $\mathbb{R}^d$ and $H_i = \{ \vec{x} \in \RR^d : x_i = 0\}.$ Moreover, given a basis $\mathcal{B} =\{\vec{b}_1, \dots, \vec{b}_d\}$ of $\RR^d$ and any $I \subseteq [d]$, we denote $\pi_I:\mathbb{R}^d\to\mathbb{R}^{|I|}$ to be the projection map satisfying 
\begin{equation} \label{zfg}
     \pi_I(\sum_{i=1}^d \lambda_i \cdot \vec{b}_i) = \sum_{i \in I} \lambda_i \cdot \vec{b}_i 
\end{equation}
for every $\lambda_1, \dots, \lambda_d \in \RR$. While we will not explicitly write the particular basis $\mathcal{B}$ in notation of $\pi_I$, it will be clear in all instances which $\mathcal{B}$ we will be using.

With this notation in hand, we state our next lemma.

\begin{lemma} \label{comp2}
Let $\mathcal{B}$ be the standard basis of $\RR^d$. Then for every $I\subseteq[d]$ and $i\in[d]$ and for all finite non-empty sets $A_1,\ldots,A_k\subseteq\RR^d$, one has
\begin{equation} \label{proj89}
|\pi_I(\mathcal{C}_i(A_1)+\dots+\mathcal{C}_i(A_k))|\leq|\pi_I(A_1+\dots+A_k)|
\end{equation}
\end{lemma}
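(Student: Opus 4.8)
\textbf{Proof proposal for Lemma \ref{comp2}.}

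The plan is to reduce to the two cases $i \in I$ and $i \notin I$, and in each case compare the projection of the compressed sumset with the projection of the original sumset fibrewise. Throughout, fix the hyperplane $H_i = \{\vec{x} : x_i = 0\}$ and the vector $e_i$, so that $\mathcal{C}_i = \mathcal{C}_{H_i, e_i}$, and recall from Lemma \ref{compression} that $\mathcal{C}_i(A_1 + \dots + A_k) \supseteq \mathcal{C}_i(A_1) + \dots + \mathcal{C}_i(A_k)$.

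First I would handle the case $i \notin I$. Here the projection $\pi_I$ simply deletes the $i$-th coordinate, so $\pi_I$ factors through $\mathcal{C}_i$ in the sense that $\pi_I(\mathcal{C}_i(S)) \subseteq \pi_I(S)$ for any finite set $S$: compression along $e_i$ only moves points within fibres of $\pi_I$ and never creates a new fibre. Combining this with the sumset inclusion from Lemma \ref{compression}, we get
\[
\pi_I(\mathcal{C}_i(A_1) + \dots + \mathcal{C}_i(A_k)) \subseteq \pi_I(\mathcal{C}_i(A_1 + \dots + A_k)) \subseteq \pi_I(A_1 + \dots + A_k),
\]
which yields \eqref{proj89} in this case.

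The case $i \in I$ is the substantive one, and I expect it to be the main obstacle. Now $\pi_I$ retains the $i$-th coordinate, so compressing along $e_i$ genuinely interacts with the projection. The key point is that for the standard basis, $\pi_I$ commutes with $\mathcal{C}_i$, i.e.\ $\mathcal{C}_i(\pi_I(S)) = \pi_I(\mathcal{C}_i(S))$, because compression along $e_i$ acts independently on each fibre over $H_i \cap \pi_I(\RR^d)$ and the fibre sizes are preserved under $\pi_I$ exactly when $i \in I$ (so no collapsing of the compressed direction occurs). Granting this, apply the sumset inclusion, then $\pi_I$, then use that $\pi_I$ is a group homomorphism so $\pi_I(A_1 + \dots + A_k) = \pi_I(A_1) + \dots + \pi_I(A_k)$, and similarly for the compressed sets; it then remains to show $\mathcal{C}_i(\pi_I(A_1)) + \dots + \mathcal{C}_i(\pi_I(A_k)) \subseteq \mathcal{C}_i(\pi_I(A_1) + \dots + \pi_I(A_k))$, which is precisely Lemma \ref{compression} applied in $\RR^{|I|}$ with hyperplane $H_i \cap \RR^{|I|}$ and vector $e_i$. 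Chaining these inclusions gives
\[
\pi_I(\mathcal{C}_i(A_1) + \dots + \mathcal{C}_i(A_k)) = \mathcal{C}_i(\pi_I(A_1)) + \dots + \mathcal{C}_i(\pi_I(A_k)) \subseteq \mathcal{C}_i\big(\pi_I(A_1 + \dots + A_k)\big),
\]
and since compression preserves cardinality and the compressed set has the same projection-fibre structure, taking cardinalities yields \eqref{proj89}. The one technical step to verify carefully is the commutation $\pi_I \circ \mathcal{C}_i = \mathcal{C}_i \circ \pi_I$ when $i \in I$: this needs the observation that two points of $A$ with the same $\pi_I$-image automatically have the same image under the projection onto $H_i$ along $e_i$ (since $i \in I$), so the fibres being compressed are compatible with $\pi_I$, and no two distinct fibres of the ambient compression merge after applying $\pi_I$.
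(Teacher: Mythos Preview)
Your case split and the $i\notin I$ argument are fine (indeed one has equality there, since $\pi_I(\mathcal{C}_i(A_l))=\pi_I(A_l)$ directly). The gap is in the case $i\in I$: the commutation $\pi_I\circ\mathcal{C}_i=\mathcal{C}_i\circ\pi_I$ that you assert is \emph{false} whenever $I\subsetneq[d]$. Your justification---that two points with the same $\pi_I$-image have the same projection onto $H_i$ along $e_i$---is incorrect: such points agree on all coordinates in $I$ (in particular the $i$-th), but may differ at some $j\notin I$, and the projection onto $H_i$ retains that $j$-th coordinate. Hence distinct $e_i$-fibres \emph{do} merge under $\pi_I$. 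Concretely, take $d=2$, $I=\{1\}$, $i=1$, and $A=\{(0,0),(1,0),(5,1)\}$: then $\mathcal{C}_1(A)=\{(0,0),(1,0),(0,1)\}$, so $\pi_I(\mathcal{C}_1(A))=\{0,1\}$, whereas $\mathcal{C}_1(\pi_I(A))=\mathcal{C}_1(\{0,1,5\})=\{0,1,2\}$.

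The fix is that you only need the \emph{inclusion} $\pi_I(\mathcal{C}_i(A_l))\subseteq\mathcal{C}_i(\pi_I(A_l))$, and this does hold: for each $\vec{w}$ in the relevant hyperplane of $\RR^{|I|}$, the fibre $\pi_I(\mathcal{C}_i(A_l))_{\vec{w}}$ is the arithmetic progression $\{\vec{w}+n\cdot e_i:0\le n<\max_{\vec{u}}|(A_l)_{\vec{u}}|\}$ (maximum over $\vec{u}\in H_i$ with $\pi_I(\vec{u})=\vec{w}$), which is contained in $\{\vec{w}+n\cdot e_i:0\le n<|\pi_I(A_l)_{\vec{w}}|\}=\mathcal{C}_i(\pi_I(A_l))_{\vec{w}}$. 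Replacing your first ``$=$'' by ``$\subseteq$'' then makes the chain valid. This fibrewise inequality $|\pi_I(\mathcal{C}_i(A_l))_{\vec{w}}|\le|\pi_I(A_l)_{\vec{w}}|$ is exactly what the paper isolates and then feeds directly into a fibrewise sumset count (summing over $\vec{w}$), rather than invoking Lemma~\ref{compression} in $\RR^{|I|}$; once your commutation error is corrected, the two routes are essentially the same.
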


\begin{proof}
The case when $i \notin I$ is relatively straightforward, since in this case, one has
\[ \pi_I(\mathcal{C}_i(A_1)+\dots+\mathcal{C}_i(A_k)) = \pi_I(A_1+\dots+A_k). \]
Thus, we may assume that $i\in I$. Writing $J=I\setminus\{i\}$, we define the projection map $\pi: \RR^{|I|} \to \RR^{|J|}$ as $\pi(\sum_{j \in I} \lambda_j \cdot e_j) = \sum_{j \in J} \lambda_j \cdot e_j$ for every sequence $\{\lambda_{j}\}_{j \in J}$ of real numbers. Note that $\pi_J = \pi \circ \pi_I$. As before, for any $\vec{x} \in \RR^{|J|}$ and any non-empty set $C \subset \RR^{|I|}$, we define $C_{\vec{x}} = \{ \vec{c} \in C: \pi(\vec{c}) = \vec{x}\}$. With this notation in hand, we observe that for every $\vec{x} \in \mathbb{R}^{|J|}$ and every $1 \leq l \leq k$, the set $\pi_I(\mathcal{C}_i(A_l))_{\vec{x}}$ is an arithmetic progression of size at most $|\pi_I(A_l)_{\vec{x}}|$ and common difference $e_i$. Thus for any $\vec{z} \in \mathbb{R}^{|J|}$, one has that
\begin{align*}
  |\pi_I(\mathcal{C}_i(A_1) + \dots + \mathcal{C}_i(A_k))_{\vec{z}}| 
  & = \max_{\substack{\vec{x}_1, \dots, \vec{x}_k \in \RR^{|J|}\\ \vec{x}_1 + \dots + \vec{x}_k = \vec{z} }}   |\pi_I(\mathcal{C}_i(A_1))_{\vec{x}_1}| + \dots + |\pi_I(\mathcal{C}_i(A_k))_{\vec{x}_k}| - (k-1)   \\
  & \leq \max_{\substack{\vec{x}_1, \dots, \vec{x}_k \in \RR^{|J|}\\ \vec{x}_1 + \dots + \vec{x}_k = \vec{z} }}   |\pi_I(A_1)_{\vec{x}_1}| + \dots + |\pi_I(A_k)_{\vec{x}_k}| - (k-1) \\
  & \leq \max_{\substack{\vec{x}_1, \dots, \vec{x}_k \in \RR^{|J|}\\ \vec{x}_1 + \dots + \vec{x}_k = \vec{z} }}   |\pi_I(A_1)_{\vec{x}_1} + \dots + \pi_I(A_k)_{\vec{x}_k}| \\
  & =  |\pi_I(A_1 + \dots + A_k)_{\vec{z}}|,
\end{align*}
where the second inequality follows from \eqref{cdav}. The required estimate \eqref{proj89} may be deduced by summing the above inequality for all $\vec{z} \in \pi_J(A_1 + \dots + A_k)$.
\end{proof}

Our final aim in the section is to prove the following Brunn--Minkowski type inequality for $k$-fold discrete sumsets. The $k=2$ case of this was proven by Conlon--Lim \cite[Lemma 2.1]{CL2022}, with a similar result appearing in earlier work of Green--Tao \cite[Lemma 2.8]{GT2006}.

\begin{lemma} \label{gen:lem:2.1}
For any basis $\mathcal{B}$ of $\mathbb{R}^d$ and any finite, non-empty sets $A_1,\dots,A_k \subset \mathbb{R}^d$, one has
\[|A_1+\dots+A_k|\geq(|A_1|^{1/d}+\dots+|A_k|^{1/d})^d-\sum_{I\subsetneq[d]}(k-1)^{d-|I|}|\pi_I(A_1+\dots+A_k)|.\]
\end{lemma}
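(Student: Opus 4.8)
The strategy is to adapt the proof of the $k=2$ case due to Conlon--Lim \cite[Lemma 2.1]{CL2022} to arbitrary $k$: reduce to down-sets by compression and then run an induction on the dimension $d$ modelled on the slicing proof of the continuous Brunn--Minkowski inequality, together with a Hölder-type one-dimensional estimate.

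\emph{Reduction to down-sets.} Let $\mathcal{C}_1,\dots,\mathcal{C}_d$ denote the compression operators in the $d$ coordinate directions of $\mathcal{B}$. By Lemma \ref{compression}, replacing each $A_i$ by $\mathcal{C}_j(A_i)$ does not increase $|A_1+\dots+A_k|$, and by Lemma \ref{comp2} it does not increase any $|\pi_I(A_1+\dots+A_k)|$; as the $|A_i|$ are unchanged, the right-hand side of the asserted inequality does not decrease. Hence it suffices to prove the lemma when every $A_i$ is a down-set with respect to $\mathcal{B}$. Termination of the iteration is standard: after one full pass $\mathcal{C}_1,\dots,\mathcal{C}_d$ all coordinates of all points (in the basis $\mathcal{B}$) lie in $\ZZ_{\geq 0}$ and stay bounded, after which the nonnegative integer $\sum_i\sum_{\vec a\in A_i}\|\vec a\|^2$ strictly decreases under any non-trivial compression.

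\emph{Induction on $d$ by slicing.} The case $d=1$ is exactly \eqref{cdav}, as the only $I\subsetneq[1]$ is $I=\emptyset$ with $(k-1)^{1}|\pi_\emptyset(\cdot)|=k-1$. Assume $d\geq2$ and the $A_i$ are down-sets. Slice along the last basis direction: for $j\geq 0$ let $A_i^{(j)}\subseteq\mathrm{span}(\vec b_1,\dots,\vec b_{d-1})$ be the $j$-th slice of $A_i$, so that $A_i^{(0)}\supseteq A_i^{(1)}\supseteq\cdots$, $|A_i|=\sum_j|A_i^{(j)}|$, and (writing $S=A_1+\dots+A_k$) $S^{(j)}\supseteq A_1^{(j_1)}+\dots+A_k^{(j_k)}$ whenever $j_1+\dots+j_k=j$; also $A_1^{(0)}+\dots+A_k^{(0)}=\pi_{[d-1]}(S)$. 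Let $L_i$ be the number of nonempty slices of $A_i$ and $L=\sum_i(L_i-1)+1$ that of $S$. For each $j\in\{0,\dots,L-1\}$ and each admissible $(j_1,\dots,j_k)$ (i.e.\ $0\le j_i\le L_i-1$, $\sum j_i=j$), the inductive hypothesis applied to $A_1^{(j_1)}+\dots+A_k^{(j_k)}$ in dimension $d-1$ combined with $|\pi_J(A_1^{(j_1)}+\dots+A_k^{(j_k)})|\le|\pi_J(S^{(j)})|$ for $J\subseteq[d-1]$ gives a lower bound for $|S^{(j)}|$ whose error term is independent of $(j_1,\dots,j_k)$; maximising the main term over admissible $(j_1,\dots,j_k)$, summing over $j$, and using $\sum_{j=0}^{L-1}|\pi_J(S^{(j)})|=|\pi_{J\cup\{d\}}(S)|$ yields
\[ |S|\ \ge\ \sum_{j=0}^{L-1}\ \max_{j_1+\dots+j_k=j}\Big(\sum_{i=1}^k|A_i^{(j_i)}|^{1/(d-1)}\Big)^{d-1}\ -\ \sum_{\substack{I\subsetneq[d]\\ d\in I}}(k-1)^{d-|I|}|\pi_I(S)|, \]
so the sets $I$ containing $d$ are already accounted for exactly.

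\emph{Reduction to a one-dimensional inequality.} Put $f_i(j):=|A_i^{(j)}|$, a non-increasing sequence with $\sum_j f_i(j)=|A_i|$. It remains to establish
\[ \sum_{j=0}^{L-1}\ \max_{j_1+\dots+j_k=j}\Big(\sum_{i=1}^k f_i(j_i)^{1/(d-1)}\Big)^{d-1}\ \ge\ \Big(\sum_{i=1}^k|A_i|^{1/d}\Big)^{d}\ -\ (k-1)\Big(\sum_{i=1}^k f_i(0)^{1/(d-1)}\Big)^{d-1}. \]
This suffices: applying the inductive hypothesis in dimension $d-1$ to $A_1^{(0)}+\dots+A_k^{(0)}=\pi_{[d-1]}(S)$ gives $\big(\sum_i f_i(0)^{1/(d-1)}\big)^{d-1}\le\sum_{J\subseteq[d-1]}(k-1)^{(d-1)-|J|}|\pi_J(S)|$, so multiplying by $k-1$ bounds the last error term by $\sum_{I\subsetneq[d],\,d\notin I}(k-1)^{d-|I|}|\pi_I(S)|$, and adding this to the previous display proves the lemma.

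\emph{The one-dimensional inequality, and the main obstacle.} I expect this to be where nearly all the difficulty lies. Its continuous counterpart, that $\int_0^\infty\sup_{\sum s_i=s}\big(\sum_i\mu_{d-1}(A_{i,s_i})^{1/(d-1)}\big)^{d-1}ds\ge\big(\sum_i\mu_d(A_i)^{1/d}\big)^{d}$, is proved by reparametrising each $A_i$ by $t\in[0,1]$ so that the fraction of $\mu_d(A_i)$ at heights below $\gamma_i(t)$ equals $t$, substituting $s=\sum_i\gamma_i(t)$ so that $\tfrac{ds}{dt}=\sum_i\mu_d(A_i)/\mu_{d-1}(A_{i,\gamma_i(t)})$, and invoking the Hölder inequality $(\sum_i b_i^{1/(d-1)})^{d-1}(\sum_i a_i/b_i)\ge(\sum_i a_i^{1/d})^{d}$. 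I would discretise this by choosing a monotone lattice path $\vec p_0=\vec 0,\vec p_1,\dots,\vec p_{L-1}=(L_1-1,\dots,L_k-1)$ — incrementing at each step the coordinate whose partial $f_i$-mass lags furthest behind the target fraction — so that $\sum_{j=0}^{L-1}\big(\sum_i f_i(p_{j,i})^{1/(d-1)}\big)^{d-1}$ lower bounds the left-hand side and, via the same Hölder inequality applied term by term, falls short of $\big(\sum_i|A_i|^{1/d}\big)^{d}$ by at most $(k-1)\big(\sum_i f_i(0)^{1/(d-1)}\big)^{d-1}$; the loss of exactly $k-1$ copies of the bottom-slice value reflects that $S$ has $k-1$ fewer slices than the count $\sum_i L_i$ suggested by the continuous picture. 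Controlling the rounding errors in this reparametrisation, and checking that the resulting constant is exactly $k-1$ so that it matches the projection terms with $d\notin I$, is the technical heart of the proof; the remaining steps are bookkeeping with \eqref{cdav}, Hölder's inequality, and Lemmas \ref{compression} and \ref{comp2}.
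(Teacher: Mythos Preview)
Your reduction to down-sets via Lemmas \ref{compression} and \ref{comp2} matches the paper exactly. After that point, however, the two proofs diverge completely, and your route contains a genuine gap.

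The paper does \emph{not} induct on $d$. Instead, once the $A_i$ are down-sets in $\ZZ_{\geq 0}^d$, it thickens each $A_i$ to a body $\mathcal{A}_i=\bigcup_{\vec x\in A_i}(\vec x+[-1,0]^d)$, applies the continuous Brunn--Minkowski inequality \emph{once} to obtain $\mu(\mathcal{A}_1+\dots+\mathcal{A}_k)\geq(\sum_i|A_i|^{1/d})^d$, and then computes $\mu(\mathcal{A}_1+\dots+\mathcal{A}_k)$ directly. Since $\mathcal{A}_1+\dots+\mathcal{A}_k=(A_1+\dots+A_k)+[-k,0]^d$, this volume decomposes as a disjoint union of unit cubes indexed by sets $S_I$ for $I\subseteq[d]$, with $|S_I|=(k-1)^{d-|I|}|\pi_I(A_1+\dots+A_k)|$; isolating the $I=[d]$ term gives the lemma immediately. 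The projection terms thus appear not as accumulated inductive errors but simply as the extra lattice points created by Minkowski-adding the cube $[-k,0]^d$.

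Your inductive slicing strategy is in principle sound and your error-term bookkeeping (matching $I\ni d$ to slice-summed errors and $I\not\ni d$ to $(k-1)$ times the bottom-slice bound) is correct. But the entire argument hinges on the one-dimensional inequality
\[
\sum_{j=0}^{L-1}\max_{j_1+\dots+j_k=j}\Big(\sum_i f_i(j_i)^{1/(d-1)}\Big)^{d-1}\ \geq\ \Big(\sum_i|A_i|^{1/d}\Big)^{d}-(k-1)\Big(\sum_i f_i(0)^{1/(d-1)}\Big)^{d-1},
\]
which you do not prove. You sketch a discretised Knothe-map argument and explicitly flag that controlling the rounding so the deficit is \emph{exactly} $(k-1)$ copies of the bottom slice ``is the technical heart of the proof''; as written this is an assertion, not a proof, and getting the constant precisely right is essential since it must match the projection terms on the nose. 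The paper's thickening approach sidesteps this difficulty entirely: it is a few lines long and uses no induction, no lattice paths, and no H\"older inequality.
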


\begin{proof} We begin by noting that the above conclusion is invariant under the application of invertible linear maps, whence we may assume that $\mathcal{B}=\{e_1,\ldots,e_k\}$ is the canonical basis of $\RR^d$. Moreover, for each $i \in [d]$, noting Lemmata \ref{compression} and \ref{comp2}, we may further assume that the sets $A_1, \dots, A_k$ are $(H_i,e_i)$--compressed. In particular, this means that the sets $A_1, \dots, A_k$ are down sets.

Given some $\vec{x} \in\mathbb{R}^d$ and a down set $A_i \subset \ZZ^d$, define the sets
\[ \mathcal{Q}_{\vec{x}}=\vec{x}+[-1,0]^d \ \ \text{and} \ \ \mathcal{A}_i = \cup_{\vec{x}\in A_i} \mathcal{Q}_{\vec{x}}. \]
Since $A_i$ is a finite, non-empty down set, we see that $\mathcal{A}_i$ is connected and compact. Writing $\mu$ to be the usual Lebesgue measure in $\mathbb{R}^d$, we note that $\mu(\mathcal{A}_i) = |A_i|$, and so, we may apply the Brunn-Minkowski inequality, see \eqref{bm1},  to deduce that
\begin{equation} \label{space45}
     \mu(\mathcal{A}_1 + \dots + \mathcal{A}_k)  \geq ( |A_1|^{1/d}  + \dots + |A_k|^{1/d} )^d .
\end{equation}
Next, we observe that
\begin{align*} 
\mathcal{A}_1 + \dots + \mathcal{A}_k 
& = A_1 + \dots + A_k  + [-k, 0]^d  \\
& = \cup_{\vec{x} \in A_1 + \dots + A_k + \{-k+1, \dots, -1,0\}^d} \mathcal{Q}_{\vec{x}} \\
& = \cup_{I \subseteq [d]} \cup_{\vec{x} \in S_I} \mathcal{Q}_{\vec{x}}, 
\end{align*}
where for any $I \subseteq [d]$, we define
\[  S_I = \{ \vec{x} \in \ZZ^d  \ \ : \ \  \pi_I(\vec{x}) \in \pi_I(A_1 + \dots + A_k) \ \ \text{and} \ \ -k+1 \leq x_i \leq -1 \ \ \text{if} \ \ i \notin I\}.  \]
Noting the above definition along with the fact that if $\vec{x}$ belongs to a down set in $\ZZ^d$ then $x_j \geq 0$ for all $j \in [d]$, we see that for all distinct $I, I' \subseteq [d]$, one has
\[ |S_I| = (k-1)^{d - |I|} |\pi_I(A_1 + \dots + A_k)| \ \ \text{and} \  \ S_I \cap S_{I'} = \emptyset. \]
Combining this with the preceding discussion, we get that
\begin{align*}
    \mu(  \mathcal{A}_1 + \dots + \mathcal{A}_k ) = \sum_{I \subseteq [d]} |S_I| = \sum_{I \subseteq [d]} (k-1)^{d - |I|} |\pi_I(A_1 + \dots + A_k)|.
\end{align*}
Putting this together with \eqref{space45} delivers the required result.  
\end{proof}


\section{Some further combinatorial and geometric maneuvers}

In this section, we will establish various lemmas that will perform an important role in the proof of Theorem \ref{e1}, with the first two being iterative Pl\"{u}nnecke--Ruzsa type inequalities. We begin by proving the following $k$-fold generalisation of a result of Krachun--Petrov \cite[Lemma 3.1]{KP2020}.

\begin{lemma}
\label{gen:lem:2.3}
Let $k, N \geq 2$ be integers, let $G$ be an abelian group and let $A_1,\dots,A_k\subseteq G$ be sets such that $|A_1|=\dots=|A_k|=N$ and let $X = A_1 + \dots + A_k$. If $|X| \leq KN$ for some $K \geq 1$, then $|X+X| \leq K^{7}N$.
\end{lemma}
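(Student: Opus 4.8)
The plan is to exploit the fact that $X$ is itself a sumset with a distinguished summand. Write $B = A_2 + \dots + A_k$, so that $X = A_1 + B$. Since $X$ contains a translate of $B$, and $B$ in turn contains a translate of $A_2$, we obtain $N \leq |B| \leq |X| \leq KN$; in particular $|X| \geq N$ as well. The point is that $B$, like each $A_i$, has cardinality comparable to $N$, so every relevant sumset and difference set built from $A_1$ and $B$ will be controlled by a bounded power of $K$ times $N$, which is exactly what we need.

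The first step I would carry out is to control $|B - B|$: from $|B + A_1| = |X| \leq K|A_1|$, Lemma \ref{pr}, applied with $B$ in the role of $A$ and $A_1$ in the role of $B$ and with $m = n = 1$, gives $|B - B| \leq K^2 N$. Next I would bound $|A_1 - B|$ by applying the Ruzsa triangle inequality \eqref{sumRTI} with $U = B$, $V = A_1$, $W = -B$; this yields $|B|\,|A_1 - B| \leq |A_1 + B|\,|B - B| \leq KN \cdot K^2 N$, hence $|A_1 - B| \leq K^3 N$ after dividing by $|B| \geq N$. Finally, observing that $X + X = (A_1 + B) + (A_1 + B) = 2A_1 + 2B$, I would apply \eqref{sumRTI} a second time with $U = -X$, $V = 2A_1$, $W = 2B$, so that $V + W = X + X$ while $V + U = A_1 - B$ and $U + W = B - A_1$; this gives $|X|\,|X + X| \leq |A_1 - B|\,|B - A_1| = |A_1 - B|^2 \leq K^6 N^2$, and dividing by $|X| \geq N$ finishes the proof with room to spare, since $|X + X| \leq K^6 N \leq K^7 N$.

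There is no serious obstacle here beyond bookkeeping: the whole argument is an orchestration of the Pl\"{u}nnecke--Ruzsa inequality and the Ruzsa triangle inequality, and the only point requiring genuine care is choosing the auxiliary sets in \eqref{sumRTI} so that the exponents of $K$ remain bounded while the powers of $N$ telescope correctly. Lumping $A_2 + \dots + A_k$ into a single set $B$ is precisely what reduces the general $k$ case to essentially the $k = 2$ manipulations of Krachun--Petrov, and the slack between the $K^6$ obtained above and the claimed $K^7$ merely reflects that one need not optimise the constants.
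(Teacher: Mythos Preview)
Your proof is correct. Both your argument and the paper's lump $A_2 + \dots + A_k$ into a single set $B$ (the paper writes $X'$) and then chain Pl\"unnecke--Ruzsa with two applications of the Ruzsa triangle inequality; the only real difference is that the paper stays entirely within sumsets (bounding $|3X'|$, then $|2X' + A_2 + A_1|$, then $|2X' + 2A_1|$ for $k\geq 3$, with a separate $|3A_i|$-based argument for $k=2$), whereas you route through the difference sets $|B-B|$ and $|A_1 - B|$ and then make the neat choice $U=-X$ in the final triangle inequality. Your variant is a bit tidier in that it treats all $k\geq 2$ at once and yields $K^6$, while the paper splits into cases and obtains $K^7$ for $k=2$ and $K^5$ for $k\geq 3$; either way the bound $K^7$ of the statement holds.
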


\begin{proof} The $k=2$ case of this was proven in \cite{KP2020}, but we provide a proof of this here for the sake of completeness. When $k=2$, we note that the hypothesis combines with Lemma \ref{pr} to deliver the bounds $|3A_1|, |3A_2| \leq K^3 N$. We now apply \eqref{sumRTI} to deduce that
\[ |A_2 + 2A_1| \leq \frac{|3A_1||A_1 +A_2|}{|A_1|} \leq K |3A_1| \leq K^4 N, \]
whereupon, we can apply \eqref{sumRTI} again to see that
\[ |2A_2 + 2A_1| \leq  \frac{|3A_2||A_2 + 2A_1|}{|A_2|} \leq K^3 |A_2 + 2A_1| \leq K^7 N.\]

We now turn to the $k \geq 3$ case. Writing $X' = A_2 + \dots + A_k$, our hypothesis implies that $|X' + A_1| \leq KN$, whence we may apply Lemma \ref{pr} to discern that $|3X'| \leq K^{3}N$. Combining this with \eqref{sumRTI}, we get that
\[ |2X' + A_2 +  A_1| \leq \frac{|2X' + A_2 + A_3| |A_1 + A_3|}{|A_3|} \leq K |3X'| \leq K^4 N, \]
where the second last inequality follows from the fact that $|A_1 + A_i|/|A_1| \leq |X|/N \leq K$ for every $2 \leq i \leq k$. Similarly, we may amalgamate the preceding bound with another application of \eqref{sumRTI} to deduce that
\[   |X+X| = |2X' + 2A_1|  \leq \frac{| 2X' + A_1 + A_2||A_1 + A_2|}{|A_2|}  \leq K | 2X' + A_1 + A_2| \leq K^5 N. \qedhere  \]
\end{proof}

This allows us to prove the following inequality on iterated sums of linear transformations.

\begin{lemma} \label{prlin}
Given $k\geq 2$ and $\L_2, \dots, \L_k \in \GL_d(\RR)$ and some finite, non-empty set $A \subset \RR^d$, writing $X = A + \L_2(A) + \dots + \L_k(A)$, if $|X| \leq K|A|$ for some $K \geq 1$, then 
\[ |X + \L_2(X) + \dots + \L_k(X)| \leq K^{7k+1} |A|. \]
\end{lemma}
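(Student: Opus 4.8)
The plan is to reduce the statement about linear images of a single set $A$ to an iterated small-doubling statement to which Lemma \ref{gen:lem:2.3} can be applied, and then to run the $K^7$-doubling bound roughly $k$ times. First I would set $N = |A|$ and observe that each of the sets $A, \L_2(A), \dots, \L_k(A)$ has cardinality exactly $N$, since the $\L_j$ are invertible. The hypothesis $|X| \le K N$ with $X = A + \L_2(A) + \dots + \L_k(A)$ is therefore precisely the hypothesis of Lemma \ref{gen:lem:2.3} applied to the $k$ sets $A_1 = A, A_2 = \L_2(A), \dots, A_k = \L_k(A)$ inside the abelian group $G = (\RR^d,+)$; this gives $|X + X| \le K^7 N$ immediately.

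The main work is then to bootstrap from $|X+X|\le K^7 N$ to a bound on $|X + \L_2(X) + \dots + \L_k(X)|$. The key point is that $\L_j(X) = \L_j(A) + \L_j\L_2(A) + \dots + \L_j\L_k(A)$, so $X + \L_2(X) + \dots + \L_k(X)$ is a sum of $k$ translated-and-distorted copies of sumsets, each of cardinality governed by $|X+X|$ via Ruzsa calculus. Concretely, I would first note that for each fixed $j$, $\L_j(X)$ is the image under the injection $\L_j$ of the set $X$, hence $|\L_j(X)| = N$ as well, and more importantly $|X + \L_j(X)|$ can be controlled: since $\L_j(X) - \L_j(X) = \L_j(X - X)$ and $|X - X| \le |X+X|^{?}$ — here I would instead argue directly through Ruzsa's triangle inequality \eqref{sumRTI} and Pl\"unnecke--Ruzsa (Lemma \ref{pr}). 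The cleanest route: apply Lemma \ref{gen:lem:2.3} once to get $|X+X|\le K^7 N$; then observe $A \subseteq X - (\L_2(A)+\dots+\L_k(A)) + (\text{fixed point})$ so that $N = |A| \le |X - \L_2(A) - \dots|$... this is getting delicate, so the honest plan is: use that $X + \L_2(X) + \dots + \L_k(X)$ equals a $(k)$-fold sumset of sets each of which is a linear image of $X$ or of a piece of $X$, invoke Lemma \ref{pr} with $B = $ a suitable reference set to bound $|mX - nX|$-type quantities by powers of $K^7$, and combine via \eqref{sumRTI} exactly as in the proof of Lemma \ref{gen:lem:2.3}, tracking that each of the $k$ linear maps contributes one extra factor of $K^7$, plus the initial $K^7$, giving an exponent of the shape $7k + 1$.

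The step I expect to be the main obstacle is keeping the Ruzsa-type bookkeeping tight enough to land on the exponent $7k+1$ rather than something larger: one must be careful that when passing from $X$ to $\L_j(X)$ and forming the $k$-fold sum, the small-doubling constant for $X$ (namely $K^7$ from Lemma \ref{gen:lem:2.3}, relative to the base size $N$) is used, not the a priori weaker $|X+X|/|X| \le K^7 N/|X|$, and that the repeated applications of \eqref{sumRTI} and Lemma \ref{pr} — one cycle per index $j = 2, \dots, k$ — each cost exactly the intended power. A secondary technical point is that the sets $\L_j(X)$ are not subsets of a common lattice, but since all estimates take place in the torsion-free group $(\RR^d,+)$ and only involve cardinalities, Lemmata \ref{pr} and \eqref{sumRTI} apply verbatim; I would state this explicitly to avoid any worry. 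Once the cycle structure is set up, the final inequality $|X + \L_2(X) + \dots + \L_k(X)| \le K^{7k+1}|A|$ follows by multiplying the per-step bounds.
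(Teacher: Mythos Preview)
Your high-level strategy --- apply Lemma \ref{gen:lem:2.3} once to get $|X+X|\le K^7|A|$, then iterate Ruzsa's triangle inequality \eqref{sumRTI} roughly $k$ times, picking up one factor of $K^7$ per linear map --- is exactly the paper's approach, and it does work. What your plan is missing, and what you yourself flag as ``delicate'', is the concrete organising device that makes the bookkeeping a one-liner: a telescoping sequence in which a single summand $\L_j(A)$ is upgraded to $\L_j(X)$ at each step. With the convention $\L_1=\mathcal{I}_d$, set $T_0=X$ and, for $1\le j\le k$,
\[
T_j \;=\; X + \L_2(X) + \dots + \L_j(X) + \L_{j+1}(A) + \dots + \L_k(A),
\]
so that $T_{j-1}=S+\L_j(A)$ and $T_j=S+\L_j(X)$ for a common set $S$. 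One application of \eqref{sumRTI} with $U=\L_j(A)$ gives
\[
|T_j|\;\le\;\frac{|\L_j(A)+\L_j(X)|}{|\L_j(A)|}\,|T_{j-1}|\;=\;\frac{|A+X|}{|A|}\,|T_{j-1}|\;\le\;K^7\,|T_{j-1}|,
\]
since fixing any $y_0\in\L_2(A)+\dots+\L_k(A)$ shows $A+y_0\subseteq X$, hence $|A+X|\le|X+X|\le K^7|A|$. Starting from $|T_0|=|X|\le K|A|$ and iterating $k$ times lands exactly on $|T_k|\le K^{7k+1}|A|$.

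Two small corrections to your sketch: $|\L_j(X)|=|X|$, not $N=|A|$; and Lemma \ref{pr} is never invoked --- the only inputs are a single use of Lemma \ref{gen:lem:2.3} and $k$ uses of \eqref{sumRTI}. Once the $T_j$ are introduced, there is nothing left to track.
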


\begin{proof}
We will prove this inductively, and so, for any $2 \leq j \leq k$, define
\[ T_j = X + \L_2(X) + \dots + \L_j(X) + \L_{j+1}(A) + \dots + \L_k(A). \]
Moreover, set $T_0 = X$ and $T_1 = X + \L_2(A) + \dots + \L_k(A)$. Our aim is to prove inductively that $|T_j| \leq K^{7j+1}|A|$ for every $0 \leq j \leq k$. Our hypothesis covers the base case $j=0$, and so, we proceed to the inductive step. For any $j \in [k]$, we may apply Lemma \ref{sumRTI} to get that
\[    |T_j| \leq \frac{|\L_j(A) + \L_j(X)||T_{j-1}|}{|\L_j(A)|} \leq   K^{7(j-1) +1}|A + X| \leq K^{7(j-1) + 1} |X + X| \leq K^{7j+1}|A|,   \]
where the second and final inequalities follow from the inductive hypothesis and Lemma \ref{gen:lem:2.3} respectively. This concludes the inductive step, and consequently, we finish the proof of Lemma \ref{prlin}.
\end{proof}

It is worth remarking that the conclusion of Lemma \ref{prlin} does not hold if we set $X=\L_1A+\L_2A+\dots+\L_kA$ for some arbitrary $\L_1\in\GL_d(\QQ)$ and consider upper bounds for $\L_1(X) + \dots + \L_k(X)$. Indeed, let $\L_1,\L_2\in\GL_2(\mathbb{Q})$ and $A\subset\mathbb{Q}^2$ be defined as 
\[\L_1=\begin{pmatrix}
    1 & 1\\
    0 & 1
\end{pmatrix}, \quad \L_2=\begin{pmatrix}
    1 & 1\\
    -1 & 1
\end{pmatrix}\quad\text{and}\quad A=\{(0,i):i\in[N]\}.\]
Then one sees that
\[ \L_1(A)=\L_2(A)=\{(i,i):i\in[N]\}, \]
and so, 
\[ X=\L_1(A)+\L_2(A)=\{(i,i):2\leq i\leq 2N\}.\]
On the other hand, we have
\[ \L_1(X)=\{(2i,i):2\leq i\leq 2N\} \ \ \text{and} \ \ 
 \L_2(X)=\{(2i,0):2\leq i\leq 2N\}. \]
 Hence, we get that
\[|X|=2N-1\leq 2|A|\quad\text{while}\quad |\L_1(X)+\L_2(X)|=|X|^2\geq|A|^2,\]
which would contradict an upper bound of the shape $|\L_1(X) + \L_2(X)| \ll_K |A|$ whenever $|A|$ is sufficiently large. 

\vspace{0.5cm}
We are now ready to prove the following geometric result, which roughly states that for any irreducible $\L_1, \dots, \L_k \in \GL_d(\QQ)$, if $A \subset \ZZ^d$ satisfies $|\L_1(A) + \dots + \L_k(A)| \ll |A|$, then $A$ can not have a significantly large subset lying in some lower dimensional subspace. The $k=2$ case of this was proven by Conlon--Lim \cite[Lemma 2.4]{CL2022}.

\begin{lemma} \label{gen:lem:2.4}
Let $\mathbb{F} = \QQ$ or $\RR$, let $\mathcal{L}_1,\dots,\mathcal{L}_k\in \GL_d(\mathbb{F})$ be $\mathbb{F}$-irreducible and let $A\subset \mathbb{R}^d$ be a finite, non-empty set with $|\mathcal{L}_1(A)+\dots+ \mathcal{L}_k(A)|\leq K|A|$ for some $K\geq 1$. Then for any subspace $U$ of $\mathbb{F}^d$ of dimension $r < d$, one has
\[  \sup_{\vec{x} \in \mathbb{F}^d} |(\vec{x} + U) \cap A| \leq (K|A|)^{1-2^{-r}}. \]

\end{lemma}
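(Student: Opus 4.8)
The plan is to prove the bound by induction on $r$, closely following the $k=2$ case of Conlon--Lim \cite[Lemma 2.4]{CL2022}. The base case $r=0$ is immediate: a $0$-dimensional affine subspace is a single point, so $\sup_{\vec{x}}|(\vec{x}+U)\cap A| = 1 \leq (K|A|)^{1-1} = 1$. For the inductive step, suppose the claim holds for all subspaces of dimension $< r$, and let $U$ have $\dim U = r \geq 1$. Let $\vec{x}_0 + U$ be an affine translate achieving (or nearly achieving) the supremum, write $m = |(\vec{x}_0+U)\cap A|$, and set $A_0 = (\vec{x}_0 + U)\cap A$, a subset of $A$ of size $m$ lying in an $r$-dimensional affine subspace.

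**Exploiting irreducibility.** The key point is that $\L_1,\dots,\L_k$ being $\FF$-irreducible prevents all the $\L_i(U)$ from being "aligned". Concretely, I would argue that there exist indices $i \neq j$ and a hyperplane-type constraint forcing $\L_i(A_0)$ and $\L_j(A_0)$ to interact nondegenerately; more precisely, since the $\L_i(U)$ cannot all be contained in a common subspace $V$ of dimension $r$ (by irreducibility applied with this $U$ and $V = $ the span of the $\L_i(U)$'s — if they were all contained in such a $V$ we would contradict irreducibility), there is some $\L_{i_0}$ with $\L_{i_0}(U) \not\subseteq \mathrm{span}(\L_i(U): i \neq i_0)$, or more usefully, after reindexing, the sum $\L_1(U) + \dots + \L_k(U)$ has dimension strictly greater than $r$. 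Then $X_0 := \L_1(A_0) + \dots + \L_k(A_0)$ lives in an affine subspace of dimension $> r$, while being a subset of $X := \L_1(A) + \dots + \L_k(A)$. The strategy is to project: choose a subspace $W$ of dimension $r$ such that the projection $\pi$ along $W$ maps $\L_1(A_0)+\dots+\L_k(A_0)$ onto a set lying in a subspace where one of the translated copies degenerates to dimension $< r$, allowing the inductive hypothesis to bite on $A$ itself (or on an image of $A$ under an invertible map, using invariance of the hypothesis under $\GL_d$).

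**The Plünnecke--Ruzsa input and closing the exponent.** Here is where the doubling hypothesis enters. Since $|(\vec{x}_0+U)\cap A| = m$, translating we may assume $A_0$ contains $\vec{0}$ and lies in $U$ itself; then $\L_1(A_0)+\dots+\L_k(A_0)$ has size at least $m$ (indeed it contains $\L_1(A_0) + \text{point}$, but we want a bigger lower bound). The real mechanism: restricting $A$ to fibers over the quotient $\FF^d/U'$ for a suitable $(d-r)$-dimensional complement, a pigeonhole gives a fiber $\vec{y} + U'$ — wait, rather, one uses that $A$ decomposes over translates of $U$, at least $|A|/m$ of them being nonempty. Applying $\L_1,\dots,\L_k$ and using that $|X|\leq K|A|$, together with Lemma \ref{pr} (Plünnecke--Ruzsa) and Ruzsa's triangle inequality \eqref{sumRTI} to control sums like $\L_i(A_0) + \L_j(A_0)$ inside $X - X$ or a bounded number of copies of $X$, yields a bound of the form $|A_0 + \L(A_0)| \leq (K|A|)^{c}$ for the relevant linear combination, where the key gain is that this sum lives in a lower-dimensional subspace on which the inductive hypothesis (applied to an affine image of $A$) gives $|A_0 + \L(A_0)| \leq (K|A|)^{1 - 2^{-(r-1)}}$. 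Combining $m^2 \lesssim |A_0 + \L(A_0)| \leq (K|A|)^{1-2^{-(r-1)}}$ gives $m \leq (K|A|)^{(1-2^{-(r-1)})/2} = (K|A|)^{1/2 - 2^{-r}} \leq (K|A|)^{1-2^{-r}}$, which is exactly the claimed bound (and in fact stronger; the slack is harmless).

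**Main obstacle.** The delicate step is the second paragraph: correctly using $\FF$-irreducibility to locate a pair of indices and a projection under which $\L_i(A_0)$ and $\L_j(A_0)$ become "transverse" so that their sumset has size $\gtrsim m^2$ while still sitting inside a controlled (low-dimensional, small-doubling) ambient set to which the inductive hypothesis applies. One must be careful that irreducibility is a statement about \emph{all} $k$ transformations simultaneously and about subspaces of \emph{equal} dimension, so extracting a usable two-variable transversality statement requires choosing $U$ and the auxiliary subspace $V$ of the same dimension $r$ and arguing by contradiction that some $\L_i(A_0)$ escapes every candidate $V$. Handling the field distinction $\FF = \QQ$ vs $\RR$ and the fact that $A \subseteq \RR^d$ while the $\L_i$ and subspaces are over $\FF$ also needs a remark, but this is routine since the supremum is over $\vec{x}\in\FF^d$ and one can clear denominators.
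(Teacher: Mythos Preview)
Your proposal has the right scaffolding --- induction on $r$, and using $\FF$-irreducibility to find two indices $i,j$ with $\L_i(U)\neq\L_j(U)$ --- but the core mechanism in your inductive step is not correct, and this is a genuine gap rather than a matter of detail.

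The central error is in how you invoke the inductive hypothesis. The hypothesis bounds quantities of the form $|A\cap(\vec{x}+W)|$ for subspaces $W$ of dimension $<r$; it does \emph{not} bound sumsets like $|A_0+\L(A_0)|$. Your sentence ``this sum lives in a lower-dimensional subspace on which the inductive hypothesis \dots gives $|A_0+\L(A_0)|\leq (K|A|)^{1-2^{-(r-1)}}$'' conflates the two. Relatedly, your final arithmetic yields $m\leq(K|A|)^{1/2-2^{-r}}$, which is strictly stronger than the lemma; that this ``slack'' appears should have been a warning that the inductive step is not assembled correctly.

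What actually drives the argument (and what your sketch is missing) is a covering--disjointness trick. Having chosen $i\neq j$ with $\L_i(U)\neq\L_j(U)$, set $V=\L_i(U)\cap\L_j(U)$, a subspace of dimension $r'\leq r-1$. Let $s$ be the minimal number of translates of $\L_i^{-1}(V)$ needed to cover $A'=A\cap(\vec{x}+U)$, with representatives $\vec{a}_1,\dots,\vec{a}_s\in A'$. Now the inductive hypothesis \emph{is} applicable, to $\L_i^{-1}(V)$, and gives $|A'|\leq s\,(K|A|)^{1-2^{-r'}}$. Separately, the sets $\L_i(\vec{a}_t)+\L_j(A')$ are pairwise disjoint (any coincidence would force $\vec{a}_t-\vec{a}_{t'}\in\L_i^{-1}(V)$, contradicting minimality of $s$), so $s|A'|\leq|\L_i(A')+\L_j(A')|\leq|\L_1(A)+\dots+\L_k(A)|\leq K|A|$. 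Multiplying the two inequalities eliminates $s$ and gives $|A'|^2\leq(K|A|)^{2-2^{-r'}}\leq(K|A|)^{2-2^{1-r}}$, i.e.\ $|A'|\leq(K|A|)^{1-2^{-r}}$ on the nose. No Pl\"unnecke--Ruzsa or Ruzsa triangle inequality is needed here; the only ``sumset'' input is the trivial containment of $\L_i(A')+\L_j(A')$ in a translate of $\L_1(A)+\dots+\L_k(A)$.
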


\begin{proof}
Our proof proceeds via induction on $r$. We first consider the base cases, that is, when  $r=0$ and $r=1$. The case when $r=0$ is trivial since $|(\vec{x}+U)\cap A|=|\{\vec{x}\}\cap A|\in\{0,1\}$ for any $\vec{x}\in\mathbb{F}^d$. On the other hand, for the $r=1$ case, since $\L_1, \dots, \L_k$ are invertible and $\mathbb{F}$-irreducible, there exist some $i < j$ such that $\L_i(U)$ and $\L_j(U)$ are two distinct $1$-dimensional subspaces of $\mathbb{F}^d$. Writing $A' = A \cap (\vec{x} + U)$, one has that
\[  K|A| \geq |\L_i(A) + \L_j(A)| \geq |\L_i(A') + \L_j(A')|  = |A'|^2,  \]
which concludes the base case analysis. 

We now proceed with the inductive step, and so, let $U$ be some subspace of $\mathbb{F}^d$ of dimension $r < d$. As before, let $\vec{x}$ be some element of $\FF^d$ and let $A' = A \cap (U + \vec{x})$. Since $\L_1, \dots, \L_k$ are invertible and irreducible, there exist some $i < j$ such that $\L_i(U)$ and $L_j(U)$ are two distinct $r$-dimensional subspaces of $\FF^d$. Thus, let  $V = \L_i(U) \cap \L_j(U)$ and let $s \geq 1$ be the smallest integer such that there exist $\vec{a}_1, \dots, \vec{a}_s \in A'$ satisfying $A' \subset \L_i^{-1}(V) + \{\vec{a}_1, \dots, \vec{a}_s\}$. Since $r' = \dim(V) \leq r-1$, we may apply  the inductive hypothesis to get that
\begin{equation} \label{patr}
    |A'| \leq s \sup_{\vec{x} \in \FF^d} |(\vec{x}+ \L_i^{-1}(V)) \cap A'| \leq s \sup_{\vec{x} \in \FF^d} |(\vec{x}+ \L_i^{-1}(V)) \cap A| \leq s (K|A|)^{1 - 2^{-r'}} .
\end{equation} 
Our next claim is that the sets $\L_i(\vec{a}_1) + \L_j(A'),\dots, \L_i(\vec{a}_s) + \L_j(A')$ are pairwise disjoint. Indeed, if, say, $\L_i(\vec{a}_1) - \L_i(\vec{a}_2) = \L_j(\vec{a}') - \L_j(\vec{a})$ for some $\vec{a}, \vec{a}' \in A'$, then we have that 
\[ \vec{a}_1 - \vec{a}_2 \in (A'-A') \cap \L_i^{-1}(\L_j(A'-A')) \subset U \cap \L_i^{-1}(\L_j(U)) = \L_i^{-1}(V) ,\]
but this contradicts the minimality of $s$. We employ the aforementioned claim to discern that
\[ s|A'|  \leq \sum_{t=1}^s |\L_i(\vec{a}_t) + \L_j(A')| \leq |\L_i(A') + \L_j(A')| \leq K|A|. \]
Combining this with \eqref{patr}, we get that
\[ |A'|^2  \leq (K|A|)^{2 - 2^{-r'}} \leq (K|A|)^{2 - 2^{-r+1}},  \]
which concludes our inductive step. This finishes the proof of Lemma \ref{gen:lem:2.4}.
\end{proof}


\section{Proof of Theorem \ref{e1}}

Our main aim of this section is to prove Theorem \ref{e1}, and in this endeavour, we prove the following key proposition.

\begin{Proposition} \label{gen:lem:2.5}
Let $\FF = \QQ$ or $\RR$, let $\mathcal{L}_1,\dots,\mathcal{L}_k\in \GL_d(\mathbb{F})$ be $\FF$-irreducible and let $A\subset \mathbb{F}^d$ be a finite, non-empty set such that $|\mathcal{L}_1(A)+\dots+\mathcal{L}_k (A)|\leq K|A|$ for some $K\geq 1$. Then there exists some basis $\mathcal{B} \subset \FF^d$ such that for every $j \in [d]$, one has
\begin{equation} \label{amcp}
    |\pi_{[d]\setminus {j}}(\L_1(A) + \dots + \L_k(A))| \ll_{d,k,K} |A|^{1-\sigma},
\end{equation}
 for some $\sigma \gg_{d,k,K} 1$, where the map $\pi_{[d]\setminus {j}}$ is defined as in \eqref{zfg}.
\end{Proposition}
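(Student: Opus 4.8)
The plan is to combine Lemma~\ref{gen:lem:2.4} (which controls how much of $A$ can lie in a low-dimensional affine subspace) with the Brunn--Minkowski type inequality of Lemma~\ref{gen:lem:2.1} and the sumset-stability estimates of \S4, following closely the strategy Conlon--Lim used in the $k=2$ case. Throughout, set $X = \L_1(A) + \dots + \L_k(A)$, so $|X| \leq K|A|$ by hypothesis.

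First I would find a good basis $\mathcal{B}$. Applying $\L_1^{-1}$ (which does not change the truth of \eqref{amcp} up to replacing the $\L_i$ by $\L_1^{-1}\L_i$, still $\FF$-irreducible), we may assume $\L_1$ is the identity. Then $X \supseteq A + \L_2(a_2) + \dots + \L_k(a_k)$ for any fixed $a_i \in A$, so $|X| \le K|A|$ forces $A$ itself to have small doubling: indeed $|A + A| \le |X + X|/|A| \cdot$ (something) — more directly, by Ruzsa's triangle inequality \eqref{sumRTI} and Lemma~\ref{gen:lem:2.3}, $|A+A| \ll_{k,K} |A|$. Hence Freiman's theorem (Lemma~\ref{Freiman:thm}) applies to $A$: it lies in a proper generalised arithmetic progression $P = \{\vec u_0 + l_1\vec v_1 + \dots + l_s\vec v_s\}$ of bounded additive dimension $s = O_{k,K}(1)$ and size $\ll_{k,K} |A|$. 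I would take $\mathcal B$ to be a basis of $\FF^d$ extending (a maximal linearly independent subset of) $\{\vec v_1, \dots, \vec v_s\}$; a standard pigeonholing argument on the box $[L_1]\times\dots\times[L_s]$ shows that with respect to this basis, every coordinate projection of $A$ has size $\ll_{k,K} |A|^{1 - c}$ for some $c \gg_{k,K} 1$ — equivalently, $A$ has a long ``arithmetic fiber'' in each relevant direction, so each hyperplane projection $\pi_{[d]\setminus\{j\}}(A)$ is much smaller than $|A|$. (This step is where one extracts a \emph{nearly one-dimensional fibered structure}; alternatively one can directly invoke Lemma~\ref{gen:lem:2.4} with $r = d-1$ to say $A$ has a subset of size $\ge (K|A|)^{1 - 2^{-(d-1)}}$... wait, that's a lower bound on fibers, so the useful direction is: fix the basis so that $A$ meets some translate of $\mathrm{span}\{\vec b_1\}$ in $\gg |A|^{1-2^{-(d-1)}}$ points, then iterate.) I expect this construction of $\mathcal B$ — getting simultaneously good control of \emph{all} $d$ hyperplane projections of $A$ — to be the main obstacle, and I would lean on the GAP structure from Freiman rather than Lemma~\ref{gen:lem:2.4} alone to achieve it.

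Once $\mathcal B$ is fixed, I would bound $|\pi_{[d]\setminus\{j\}}(X)|$ itself. Write $B = \L_1(A)$, etc.; note $\pi_{[d]\setminus\{j\}}(X) = \pi_{[d]\setminus\{j\}}(\L_1(A)) + \dots + \pi_{[d]\setminus\{j\}}(\L_k(A))$ only if the $\L_i$ respect the splitting, which they need not — so instead I would use that $\pi_{[d]\setminus\{j\}}$ is a linear surjection with kernel $\mathrm{span}\{\vec b_j\}$, hence $|\pi_{[d]\setminus\{j\}}(X)| \le |X| / (\min\text{fiber length of }X\text{ in direction }\vec b_j)$, and lower-bound the latter fiber length. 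To get a long fiber of $X$ in direction $\vec b_j$: since $\L_1 = \mathrm{id}$, $X \supseteq A + \vec w$ for a fixed $\vec w = \L_2(a_2)+\dots+\L_k(a_k)$, and $A$ has a fiber of length $\gg |A|^{1-c'}$ in direction $\vec b_j$ by the previous paragraph's construction; translating preserves this. Therefore $|\pi_{[d]\setminus\{j\}}(X)| \le |X| \cdot |A|^{-(1-c')} \ll_{k,K} |A|^{1 - (1-c')} = |A|^{c'}$, and provided $c' < 1$ we are done with $\sigma = 1 - c' \gg_{d,k,K} 1$. To handle all $j$ simultaneously one either (i) chooses $\mathcal B$ at the outset so that $A$ — equivalently $X \supseteq A + \vec w$ — has a long fiber in \emph{each} basis direction (possible from the GAP: if $L_1$ is the largest side-length of $P$ and $L_1 \gg |A|^{1/s}$ is huge, take $\vec b_1 = \vec v_1$; the other directions need a separate argument, perhaps using that not \emph{all} projections can be large by a counting bound $\prod_j |\pi_{[d]\setminus\{j\}}(A)| \ge |A|^{d-1}$ à la Loomis--Whitney), or (ii) applies the whole argument after a suitable change of basis for each $j$ and checks the bases can be reconciled. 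I would carry out option (i), using Loomis--Whitney together with the GAP structure to force, after one fixed linear change of coordinates, that $A$ is "$|A|^{1-\sigma}$-thin" in every coordinate hyperplane, and conclude \eqref{amcp} from the displayed fiber bound above with $\sigma \gg_{d,k,K} 1$.

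Finally, I would note the quantitative bookkeeping: the constants $C_1, C_2$ from Freiman's theorem depend only on the doubling of $A$, which is $\ll_{k,K} 1$, so $s, L/|A| \ll_{d,k,K} 1$; all exponents produced ($c, c', \sigma$) are therefore of the form $2^{-O_d(1)}$ or $1/s$, hence $\gg_{d,k,K} 1$, and all implied multiplicative constants are $O_{d,k,K}(1)$, matching the statement.
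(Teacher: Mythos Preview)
There is a genuine gap, and it stems from applying Freiman's theorem to $A$ rather than to $X$ itself. Having placed $A$ in a GAP $P$, you then try to pass to small projections of $X=\L_1(A)+\dots+\L_k(A)$ via the inequality $|\pi_{[d]\setminus\{j\}}(X)| \le |X|/(\text{minimum fibre length of }X\text{ in direction }\vec b_j)$; but the argument you give only exhibits \emph{one} long fibre of $X$ (a translate of a long fibre of $A$), which says nothing about the minimum fibre length --- that could perfectly well be $1$, and then the inequality is vacuous. The Loomis--Whitney suggestion also points the wrong way: $\prod_j |\pi_{[d]\setminus\{j\}}(A)| \ge |A|^{d-1}$ is a \emph{lower} bound on the product of projections, so it cannot force any of them to be small. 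Finally, your reading of Lemma~\ref{gen:lem:2.4} is inverted: that lemma gives an \emph{upper} bound $\sup_{\vec x}|(\vec x+U)\cap A| \le (K|A|)^{1-2^{-r}}$, i.e.\ it forbids long low-dimensional fibres of $A$; it does not produce them. So neither of your two proposed routes to ``all $d$ projections are small'' actually closes.

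The paper sidesteps the transfer problem entirely by applying Freiman to $Y = A + \L_1^{-1}\L_2(A) + \dots + \L_1^{-1}\L_k(A) = \L_1^{-1}(X)$, which is legitimate because $|Y+Y| \le K^7|A|$ by Lemma~\ref{gen:lem:2.3}. Then $Y$ itself sits inside the GAP $P$, and with the greedily chosen basis $\{\vec v_{i_1},\dots,\vec v_{i_d}\}$ one gets $|\pi_{[d]\setminus\{j\}}(Y)| \le L_1\cdots L_s/L_{i_j} \ll_K |A|/L_{i_j}$ directly --- no passage from $A$ to $X$ is needed. The only remaining task is to show that the \emph{smallest} of these, $L_{i_d}$, is still a positive power of $|A|$, and this is exactly where Lemma~\ref{gen:lem:2.4} enters in the correct direction: the hyperplane $H=\mathrm{Span}(\vec v_{i_1},\dots,\vec v_{i_{d-1}})$ meets any translate of $Y$ in at most $\ll_{d,k,K}|A|^{1-2^{1-d}}$ points, while $P$ (hence $Y$) is covered by at most $L_{i_d}\cdots L_s \le L_{i_d}^{\,s}$ translates of $H$; comparing forces $L_{i_d} \gg_{d,k,K} |A|^{\sigma}$. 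Note that to apply Lemma~\ref{gen:lem:2.4} to $Y$ rather than to $A$ one needs $|Y + \L_1^{-1}\L_2(Y) + \dots + \L_1^{-1}\L_k(Y)| \ll_K |Y|$, and this is precisely the content of Lemma~\ref{prlin}, which your outline never invokes.
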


\begin{proof}
It is worth noting that if $\L_1, \dots, \L_k \in \GL_d(\FF)$ are $\FF$-irreducible, then so are the matrices $\mathcal{I}_d, \L_1^{-1}\circ \L_2, \dots, \L_1^{-1} \circ \L_k$, where $\mathcal{I}_d$ is the $d \times d$ identity matrix. Indeed, if there were some subspaces $U, V$ of $\FF^d$ such that $U \subseteq V$ and $\L_1^{-1}( \L_j(U)) \subseteq V$ for every $2 \leq j \leq k$, then $\L_j(U) \subseteq \L_1(V)$ for every $1 \leq j \leq k$. Moreover, the hypothesis implies that the set 
\[ Y = A + \L_1^{-1}(\L_2(A)) + \dots + \L_1^{-1}(\L_k(A))\]
satisfies $|Y| \leq K|A|$. 

Our aim will be to show that there exists some basis $\mathcal{B}' = \{\vec{b}_1', \dots, \vec{b}_d'\}$ of $\FF^d$ such that for every $j \in [d]$, the map $f_j: \FF^d \to \FF^{d-1}$ satisfying 
\[ f_j( \sum_{i=1}^d \lambda_i \cdot \vec{b}_i') = \sum_{i \in [d]\setminus \{j\} } \lambda_i \cdot \vec{b}_i'  \ \ \text{for every} \ \ \lambda_{1}, \dots, \lambda_d \in \FF \]
further satisfies 
\begin{equation} \label{shw2}
    |f_j(Y)| \ll_{d,k,K} |A|^{1 - \sigma} 
\end{equation}
for some $\sigma \gg_{d,k,K} 1$. This will suffice to prove the desired result, indeed, define the set $X = \L_1(A) + \dots + \L_k(A)$ and consider the basis $\mathcal{B} = \{ \L_1(\vec{b}_1'), \dots, \L_1(\vec{b}_s')\}$ of $\FF^d$. Then for any $I \subseteq [d]$, one has
\[  \sum_{i \in I} \lambda_i \cdot \vec{b}_i' \in Y \ \ \text{if and only if} \ \  \sum_{i \in I} \lambda_i \cdot \L_1(\vec{b}_i') \in X.  \]
Moreover, since the sets $\mathcal{B}', \mathcal{B}$ are bases of $\FF^d$, we may use \eqref{shw2} to deduce that for every $j \in [d]$, one has 
\[ |\pi_{[d]\setminus\{j\}}(X)| = |f_j(Y)| \ll_{d,k,K} |A|^{1 - \sigma}, \]
for some $\sigma \gg_{d,k,K} 1.$

Hence, we will now proceed to construct some $\mathcal{B}'$ such that one has \eqref{shw2}. Note that $|Y| \leq K|A|$, and so, we may apply Lemma \ref{gen:lem:2.3} to deduce that $|Y+Y| \leq K^7|A|$. We may now apply Lemma \ref{Freiman:thm} to discern that $Y$ is contained in some generalised arithmetic progression $P$ where 
\[ P= \{ \vec{v}_0 + l_1 \cdot \vec{v}_1 + \dots + l_s \cdot \vec{v}_s \ : \ l_i \in [L_i] \ \ ( 1 \leq i \leq s)  \}  \]
for some $\vec{v}_0, \dots, \vec{v}_s \in \FF^d$ and  $s \ll_K 1$ and  $L_1, \dots, L_s \in \mathbb{N}$ satisfying $L_1 \geq \dots \geq L_s$ and $L_1 \dots L_s \ll_K |A|$. Since $|A|$ is sufficiently large in terms of $d,k,K$, we may apply Lemma \ref{gen:lem:2.4} to deduce that $A$ can not be completely contained in a lower dimensional subspace of $\FF^d$. Moreover, since $P$ contains a translate of $A$, we get that $\mathrm{Span}_{\mathbb{F}}(\{\vec{v}_1,\dots, \vec{v}_s\})=\FF^d$, where for any finite, non-empty set $T \subset \FF^d$ we define 
\[ \mathrm{Span}_{\mathbb{F}}(T) = \{ \sum_{\vec{t}\in T} \lambda_t \cdot \vec{t} : \lambda_{\vec{t}} \in \FF \}.\]
We now construct a basis $\mathcal{B}' = \{\vec{v}_{i_1}, \dots, \vec{v}_{i_d}\} \subseteq \{\vec{v}_1,\dots, \vec{v}_s\}$ greedily by letting $\vec{v}_{i_1}=\vec{v}_1$, and for $2\leq j\leq d$, letting
\[ i_j=\min\{l: \vec{v}_l\not\in\mathrm{Span}_{\QQ}(\{\vec{v}_{i_1},\dots,\vec{v}_{i_{j-1}}\})\}. \]

As $Y \subseteq P$, we get that
\[ |f_j(Y)| \leq |f_j(P)| \leq \frac{L_1\dots L_s}{L_{i_j}} \ll_K \frac{|A|}{L_{i_j}} .   \]
Since $L_1 \geq \dots \geq L_s$, our goal of proving \eqref{shw2} reduces to showing that $L_{i_d} \gg |A|^{\sigma}$ for some $\sigma \gg_{d,k,K} 1$. Thus, let $H=\mathrm{Span}_{\FF}(\mathcal{B}\setminus\{\vec{v}_{i_d}\})$. Since $Y =A + \L_1^{-1}(\L_2(A)) + \dots + \L_1^{-1}(\L_k(A))$ and $|Y|  \leq K|A|$, we may apply Lemma \ref{prlin} to deduce that 
\[ |Y + \L_1^{-1}(\L_2(Y)) + \dots + \L_1^{-1}(\L_k(Y))| \leq K^{7k+1}|Y|.\]
Moreover, as $\dim(H) = d-1$ and $\mathcal{I}_d, \L_1^{-1}\circ\L_2, \dots, \L_1^{-1}\circ\L_k$ are $\FF$-irreducible, we can apply Lemma \ref{gen:lem:2.4} to get that
\[ \sup_{\vec{x} \in \FF^d}|( H + \vec{x}) \cap Y| \ll_{d,k, K} |A|^{1 - 2^{-d+1}} L_{i_d}^{s - d + 1},. \]
Note that we can cover $P$, and hence $Y$, by at most $L_{i_d}\dots L_s$ many translates of $H$, and so, 
\[ |A| \leq |Y| \leq L_{i_d}\dots L_s \sup_{\vec{x} \in \FF^d}|( H + \vec{x}) \cap Y| \ll_{d,k,K}  |A|^{1 - 2^{-d+1}} L_{i_d}^{s}. \]
This simplifies to give $L_{i_d} \gg_{d,k,K} |A|^{\sigma}$ for some $\sigma \gg_{d,k,K} 1$, and so, we are done.
\end{proof}

We are now ready to present the proof of Theorem \ref{e1}.

\begin{proof}[Proof of Theorem \ref{e1}]
We begin by deducing \eqref{js3} from a combination of Lemma \ref{gen:lem:2.1} and Proposition \ref{gen:lem:2.5}. We begin by claiming that we may assume that $A \subset \FF^d$, which is trivial when $\FF = \RR$ and which may be deduced from the discussion following \cite[Theorem 6.3]{CL2022} when $\FF = \QQ$. We may further assume that $|\mathcal{L}_1(A)+\cdots +\mathcal{L}_k(A)| < 2k^d|A|$, since otherwise we are done. We can now apply Proposition \ref{gen:lem:2.5} with $K=2k^d$ to get find some basis $\mathcal{B}$ of $\FF^d$ satisfying $\eqref{amcp}$. We now combine this with the conclusion of Lemma \ref{gen:lem:2.1} to deduce that
\begin{align*}
     |\mathcal{L}_1(A)+\dots+\mathcal{L}_k(A)|
     & \geq k^d |A| - O_{d,k}(\sum_{j=1}^d |\pi_{[d]\setminus\{j\}}(\L_1(A) + \dots + \L_k(A) )|)  \\
     & \geq k^d |A| - O_{d,k}(|A|^{1 - \sigma})
\end{align*} 
for some $\sigma \gg_{d,k} 1$, which is precisely the content of \eqref{js3}.

We now proceed to prove \eqref{lux}. As in the above proof, noting the discussion following \cite[Theorem 6.3]{CL2022}, it suffices to prove Theorem \ref{e1} for finite, non-empty sets $A \subset \mathbb{Z}^d$. Next, we can translate $A$ so as to ensure that $0\in A$. Writing $X = \L_1(A) + \dots + \L_k(A)$ and $\Lambda = (\det(\L_1)^{1/d} + \dots + \det(\L_k)^{1/d})^d$, we may assume that $|X| \leq 2\Lambda |A|$, since otherwise we are done. Applying Proposition \ref{gen:lem:2.5}, we find a basis $\mathcal{B}$ of $\QQ^d$ satisfying $\eqref{amcp}$. We now apply Lemma \ref{gen:lem:2.1} with $A_j = \L_{2j-1}(A) + \L_{2j}(A)$ for every $1 \leq j \leq k/2$, whence we find that
\begin{align} \label{weather}
     |X|
     & \geq (\sum_{j=1}^{k/2} |\L_{2j-1}(A) + \L_{2j}(A)|^{1/d})^d - O_{d,k}(\sum_{j=1}^d |\pi_{[d]\setminus\{j\}} (\L_1(A) + \dots + \L_k(A))|) \nonumber  \\
     & \geq (\sum_{j=1}^{k/2} |\L_{2j-1}(A) + \L_{2j}(A)|^{1/d})^d - O_{d,k,\Lambda}(|A|^{1 - \sigma}),
\end{align}
for some $\sigma \gg_{d,k, \Lambda} 1$. Noting the hypothesis that $\L_{2j-1}, \L_{2j}$ are coprime and irreducible for every $1 \leq j \leq k/2$, we may now apply the main result of Conlon--Lim \cite[Theorem 1.5]{CL2022}, see also \eqref{jp1}, to deduce that 
\[ |\L_{2j-1}(A) + \L_{2j}(A)| \geq (\det (\L_{2j-1})^{1/d} + \det(\L_{2j})^{1/d} )^d |A| - O_{d,k ,\Lambda}(|A|^{1 - \sigma_j}) \]
for every $1 \leq j \leq k/2$, where $\sigma_1, \dots, \sigma_{k/2} \gg_{d,k, \Lambda} 1$. Upon performing some elementary computations, one may further deduce that
\[  |\L_{2j-1}(A) + \L_{2j}(A)|^{1/d} \geq  (\det (\L_{2j-1})^{1/d} + \det(\L_{2j})^{1/d} ) |A|^{1/d} - O_{d,k, \Lambda}(|A|^{1/d - \sigma_j}). \]
Putting this together with \eqref{weather} and writing $\sigma' = \min_{1 \leq i \leq k} \sigma_i$, we get that
\begin{align*}
    |X| 
    & \geq (1 - O_{d,k,\Lambda}(|A|^{-\sigma'}) )^d  \Lambda |A| - O_{d,k \Lambda}(|A|^{1 - \sigma}) \\
    & \geq \Lambda |A| - O_{d,k, \Lambda }(|A|^{ 1- \delta}),
\end{align*}
where $\delta = \min\{\sigma', \sigma\}$ satisfies $\delta \gg_{d,k, \Lambda} 1$. This concludes the proof of Theorem \ref{e1}.
\end{proof}


\section{Proof of Proposition \ref{sharpexample}}

In this section, we will record the proof of Proposition \ref{sharpexample}. We can immediately deduce that $\L_1,\dots,\L_d\in\M_d(\ZZ)$ are coprime since $\det(\L_i)=1$ for every $1\leq i\leq d$. We will now proceed to show that $\L_1, \dots, \L_d$ are $\RR$-irreducible. The case when $d=2$ is trivial, since in this case, $\L_2$ rotates vectors anticlockwise by $\pi/2$, and so, we may assume that $d\geq 3$. Our proof of this case follows via contradiction, and so, suppose that $\L_1,\L_2,\dots,\L_d$ are not $\RR$-irreducible. Thus, there exist non-trivial subspaces $U,V$ of $\RR^d$ of the same dimension such that $\L_j(U)=V$ for all $j\in [d]$. Since $\L_1$ is the identity matrix, we must have $U = V$, that is, $V$ is an invariant subspace of $\L_j$ for every $j \in [d]$.

Now, choose some $\vec{a} \in V \setminus \{0\}$ and let $S_{\vec{a}} \subset\RR^d$ be the orbit of $\vec{a}$ under the action of $\L_j$ for every $j \in [d]$. Since $V$ is invariant under $\L_1, \dots, \L_d$, it follows that $S_{\vec{a}} \subset V$. Moreover, since $\dim V<d$ by assumption, there must exist some hyperplane $H$ defined by the equation $c_1 x_1 + \dots + c_d x_d = 0$ such that $S_{\vec{a}} \subset V \subset H$. We will now show that such a hyperplane can not exist.

For any $j,k,l \in [d]$ such that $1 < j$ and $k < l$, we define  $\vec{v}_{k,l} = \L_l(\L_l(\L_k(\L_k(\vec{a}))))$ and $\vec{v}_j = \L_j(\L_j(\vec{a}))$. Note that both
\[ \vec{v}_j = (-a_1,  \dots, -a_j,  \dots, a_d) \ \ \text{and} \ \  \vec{v}_{k,l} = (a_1, \dots, -a_k, \dots, -a_l, \dots, a_d)  \]
are elements of $S_{\vec{a}}$. Given $1 \leq k< l \leq d$,  since $\vec{a}, \vec{v}_{k,l} \in S_{\vec{a}} \subset H$, we see that
\[ c_k a_k + c_l a_l = 0 . \]
Thus, for any distinct $l,l' \in [d]$, choosing some $k \in [d]\setminus\{l,l'\}$ we may apply the preceding equality twice to deduce that
\[ c_l a_l = - c_k a_k = c_{l'} a_{l'} . \]
Combining this with the fact that $c_{l'}a_{l'} + c_l a_l = 0$, we discern that 
\[ c_l a_l = 0 \ \ \text{for every} \ \ l \in [d].\] 
Now, for any $1 \leq j \neq  k  \leq d$, we see that the $k^{\text{th}}$ coordinate of $\L_k(\L_j(\vec{a}))$ is either $a_j$ or $-a_j$. Thus, repeating the above procedure with $\L_k(\L_j(\vec{a}))$ instead of $\vec{a}$, we may deduce that \[ c_k a_j = 0 \ \ \text{for any} \ \ 1 \leq j \neq  k  \leq d. \]
Moreover, since there must exist some $k \in [d]$ such that $c_k \neq 0$, the above implies that $a_j = 0$ for every $j \in [d]$. This contradicts the fact that $\vec{a} \in V \setminus \{0\}$, and so, we finish the proof of our claim. 

Finally, let $A = \{ \vec{x} \in \ZZ^d : |x_1|, \dots, |x_d| \leq N\}$. Note that $\L_1(A) = \dots = \L_d(A) = A$. Thus, we have that
\[ |\L_1(A) + \dots + \L_d(A)| = |dA| = (2dN + 1)^d = d^d |A| + O_d(|A|^{1 - 1/d}), \]
which is the desired estimate.


\section{Proof of Theorem \ref{manysums}}

We begin this section by recording \cite[Lemma 3.7]{GG2001} which states that for any finite, non-empty set $A \subset \mathbb{Z}^d$ with $|A| = N$ and $\dim(A) = d$, there exists a finite sequence of compressions which transform $A$ into a long simplex of the form
\[ A_{d,N}  = \{ 0,e_2, \dots, e_d\} \cup \{e_1, 2 \cdot e_1, \dots, (N - d) \cdot e_1\}. \]

\begin{lemma} \label{gardnergronchi}
For any finite, non-empty set $A \subset \mathbb{Z}^d$ with $|A| = N$ and $\dim(A) = d$, there exists some $l \in \mathbb{N}$ and vectors $\vec{v}_1, \dots, \vec{v}_l \in \ZZ^d$ and hyperplanes $H_1, \dots, H_l \in \mathbb{R}^d$, where for every $1 \leq i \leq l$ we have $H_i = \{\vec{x}\in \mathbb{R}^d: x_j = 0\}$ for some $1\leq j \leq d$, such that 
\[ \mathcal{C}_{H_l, \vec{v}_l}( \dots (\mathcal{C}_{H_2, \vec{v}_2}(\mathcal{C}_{H_1, \vec{v}_1}(A))) \dots ) = A_{d,N}. \]
\end{lemma}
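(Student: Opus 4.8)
The plan is to follow Gardner--Gronchi~\cite{GG2001}: no induction on $d$ is needed, and the whole argument runs as an explicit compression algorithm monitored by a single non-negative integer monovariant (for $d=1$ the second phase below is vacuous). First I would apply the coordinate compressions $\mathcal{C}_i=\mathcal{C}_{H_i,e_i}$, $1\leq i\leq d$, repeatedly until $A$ becomes a down set. One pass already forces $A\subseteq\ZZ^d_{\geq 0}$, and whenever some $\mathcal{C}_i$ acts non-trivially the integer $\sum_{\vec{a}\in A}\n{\vec{a}}_1$ strictly decreases, since $\mathcal{C}_i$ preserves the multiset of $\ell$-th coordinates for every $\ell\neq i$ and replaces the $i$-th coordinates in each $e_i$-fibre by an initial segment of $\ZZ_{\geq 0}$, which has least possible sum; hence this phase halts. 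A down set of full affine dimension $d$ necessarily contains $\{0,e_1,\dots,e_d\}$.

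Next I would drive the down set to $A_{d,N}$ using the monovariant $\Phi(D)=\sum_{\vec{a}\in D}(a_2+\dots+a_d)$. For a down set $D$ with $|D|=N$ and $\dim D=d$ one has $\Phi(D)\geq d-1$ (the points $\{0,e_1,\dots,e_d\}\subseteq D$ contribute $d-1$, and the remaining $N-d-1$ points contribute nothing only when they all lie on the $e_1$-axis), with equality exactly when $D=A_{d,N}$. Given such a $D$ with $D\neq A_{d,N}$, set $L=\max_{\vec{x}\in D}x_1\geq 1$ and, for $2\leq j\leq d$, consider $\mathcal{C}_{H_j,\vec{v}_j}$ with $\vec{v}_j=e_j-L\cdot e_1\in\ZZ^d$, which is not parallel to $H_j=\{x_j=0\}$. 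A short analysis of fibres gives: (i) $\mathcal{C}_{H_j,\vec{v}_j}$ maps $\ZZ^d_{\geq 0}$ into itself and fixes each of $0,e_1,\dots,e_d$, because the fibre through $e_j$ equals $\{L\cdot e_1,\,e_j\}$, which is already compressed and of size $2$, so the affine dimension is preserved; (ii) $\Phi$ weakly decreases under $\mathcal{C}_{H_j,\vec{v}_j}$, and strictly when it acts non-trivially, because inside any fibre the $j$-th coordinates are distinct non-negative integers whose sum is least precisely when they form an initial segment; and (iii) if $D$ is a down set with $D\neq A_{d,N}$, then some $\mathcal{C}_{H_j,\vec{v}_j}$ acts non-trivially on $D$ — indeed, if the fibre through a point $\vec{b}\in D$ with $b_j\geq 1$ were already compressed, its bottom point $\vec{b}+Lb_je_1-b_je_j\in D$ would have first coordinate $b_1+Lb_j\leq L$, forcing $b_1=0$ and $b_j=1$; carrying this out for every $j$ (and once more through a hypothetical point with two unit coordinates among $2,\dots,d$) would show that every point of $D$ either lies on $\{x_1=0\}$ with all coordinates in $\{0,1\}$ and at most one equal to $1$, or lies on the $e_1$-axis, and then the down set property together with $\dim D=d$, $|D|=N$ would force $D=A_{d,N}$.

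The algorithm is therefore: restore the down set property with coordinate compressions (which never raise $\Phi$), and while the current set is not $A_{d,N}$ apply some non-trivial $\mathcal{C}_{H_j,\vec{v}_j}$ (strictly lowering $\Phi$) and repeat; as $\Phi$ is a non-negative integer this must halt, and by (iii) it can only halt at $A_{d,N}$. All compressions used are of the form permitted in the statement, and $\{0,e_1,\dots,e_d\}$ is never moved, so the affine dimension remains $d$ throughout. The delicate point — and where I would follow \cite{GG2001} most closely — is the choice of direction $\vec{v}_j=e_j-L\cdot e_1$: a fixed direction such as $e_j-e_1$ is too weak (for instance the $2\times 2$ square in $\RR^2$ is a full-dimensional down set fixed by all such compressions but is not a long simplex), while a steeper $e_j-c\cdot e_1$ with $c>L$ would pull $e_j$ off $D$ and collapse the affine dimension. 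Setting $c$ equal to the current maximal first coordinate is exactly what makes the ``progress'' step and the ``no spurious fixed point'' step compatible.
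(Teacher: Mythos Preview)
Your argument is correct. The paper itself does not supply a proof of this lemma: it is simply recorded as \cite[Lemma~3.7]{GG2001} and then used in the proof of Theorem~\ref{manysums}. What you have written is a faithful and well-organised account of the Gardner--Gronchi compression algorithm, with termination handled cleanly via the non-negative integer monovariant $\Phi(D)=\sum_{\vec{a}\in D}(a_2+\dots+a_d)$; the key observation that the skew compression $\mathcal{C}_{H_j,e_j-L\cdot e_1}$ (with $L$ the current maximal first coordinate) fixes each of $0,e_1,\dots,e_d$, lands in $\ZZ_{\geq 0}^d$, and strictly lowers $\Phi$ whenever it acts non-trivially is exactly what drives the process to $A_{d,N}$. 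One small wording point: in (i) you should say that $\mathcal{C}_{H_j,\vec{v}_j}$ maps the \emph{current set} $D$ into $\ZZ_{\geq 0}^d$ rather than ``maps $\ZZ_{\geq 0}^d$ into itself'', since the relevant bound $(k-1)L\leq u_1$ on each fibre uses that the original fibre already lies in $D\subseteq\ZZ_{\geq 0}^d$ with $\max x_1=L$.
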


The second ingredient we will require for our proof of Theorem \ref{manysums} concerns estimating the size of iterated sumsets of these long simplices. 

\begin{lemma} \label{comp}
    Let $k,N,d$ be positive integers such that $N \geq d+1$. Then 
    \[ |kA_{d,N}| = \binom{k+d-1}{d}(N-d)  + \binom{k+d -1}{d-1} \]
\end{lemma}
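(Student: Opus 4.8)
\textbf{Plan for computing $|kA_{d,N}|$.}
The strategy is to directly parametrise the elements of $kA_{d,N}$ and count them. Recall that $A_{d,N} = \{0, e_2, \dots, e_d\} \cup \{e_1, 2\cdot e_1, \dots, (N-d)\cdot e_1\}$, and observe that this set has a convenient combinatorial structure: every element is either $j \cdot e_1$ for some $1 \le j \le N-d$, or one of $0, e_2, \dots, e_d$. I would first split a generic sum $a_1 + \dots + a_k$ of $k$ elements of $A_{d,N}$ according to how many of the $a_i$ come from the ``tail'' $\{e_1, \dots, (N-d)\cdot e_1\}$ and how many come from the ``head'' $\{0, e_2, \dots, e_d\}$. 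If $m$ of the summands come from the head (so $k-m$ from the tail, $0 \le m \le k$), then the head contribution is a vector of the form $c_2 e_2 + \dots + c_d e_d$ with $c_i \ge 0$ and $c_2 + \dots + c_d \le m$ (the slack being the number of head-summands equal to $0$), while the tail contribution is $t\cdot e_1$ with $t$ ranging over the sumset of $k-m$ copies of $\{1, \dots, N-d\}$, i.e. $t \in \{k-m, k-m+1, \dots, (k-m)(N-d)\}$ when $k - m \ge 1$, and $t = 0$ when $m = k$.

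The key point to make precise is that these representations give \emph{distinct} lattice points, so that $|kA_{d,N}|$ is genuinely a sum over $m$ of (number of head-vectors with exactly $m$ ``used'' slots, summed appropriately) times (number of tail values). More cleanly: a point $\vec{x} = (x_1, \dots, x_d) \in kA_{d,N}$ is determined by its coordinates, and $\vec{x} \in kA_{d,N}$ if and only if $x_2, \dots, x_d \ge 0$, $x_2 + \dots + x_d \le k$, and $x_1$ lies in an interval depending on $x_2 + \dots + x_d$: writing $s = x_2 + \dots + x_d$, we need at least $s$ summands from the head and at least one unit of $e_1$ from each of the remaining summands that are not the zero vector, which forces $x_1 \in \{0, 1, \dots, (k-s)(N-d)\}$ together with the lower constraint coming from tail summands always contributing at least $1$; carefully, the number of admissible $x_1$ given $s$ is $(k-s)(N-d) - (k-s) + 1 + 1 = (k-s)(N-d-1) + 2$ when $s < k$ and is $1$ when $s = k$ — I would verify this small computation directly from the interval $\{k - m, \dots, (k-m)(N-d)\}$ plus the possibility of $0$ from the all-head case, being careful about the $s = k-1$, $s=k$ boundary and whether $0$ is always attainable. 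The main obstacle is exactly this bookkeeping: making sure the count of valid $x_1$-values for each fixed $(x_2,\dots,x_d)$ is correct and that there is no double counting or off-by-one error at the boundary cases $s \in \{k-1, k\}$.

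Once the per-fibre count is pinned down, the total is
\[
|kA_{d,N}| = \sum_{s=0}^{k} \#\{(x_2,\dots,x_d) \in \ZZ_{\ge 0}^{d-1} : x_2 + \dots + x_d = s\} \cdot N(s),
\]
where $N(s)$ is the number of admissible values of $x_1$. The number of nonnegative integer solutions to $x_2 + \dots + x_d = s$ is $\binom{s + d - 2}{d-2}$. Substituting $N(s) = (k-s)(N-d) + (\text{correction})$ and using the standard hockey-stick identities $\sum_{s=0}^{k} \binom{s+d-2}{d-2} = \binom{k+d-1}{d-1}$ and $\sum_{s=0}^{k}(k-s)\binom{s+d-2}{d-2} = \binom{k+d-1}{d}$ (the latter by writing $(k-s)\binom{s+d-2}{d-2}$ as a difference of binomial coefficients or by a direct combinatorial argument counting $\{(y, (x_2,\dots,x_d)) : 1 \le y \le k-s,\ \sum x_i = s\}$), the sum telescopes to the claimed $\binom{k+d-1}{d}(N-d) + \binom{k+d-1}{d-1}$. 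I would double-check the final identity against a small case, say $d = 1$ (where $A_{1,N} = \{1,\dots,N-1\} \cup \{0\}$... actually $d=1$ needs $N \ge 2$ and $A_{1,N} = \{0,1,\dots,N-1\}$, giving $|kA_{1,N}| = k(N-1)+1$, matching $\binom{k}{1}(N-1) + \binom{k}{0}$), and $k = 1$ (giving $|A_{d,N}| = \binom{d}{d}(N-d) + \binom{d}{d-1} = N - d + d = N$, as it should), to confirm the constants are right.
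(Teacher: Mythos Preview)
Your approach---parametrising points of $kA_{d,N}$ by their last $d-1$ coordinates and counting the admissible first coordinates for each fibre---is valid and genuinely different from the paper's proof, which instead proceeds by induction on $d$ via the decomposition $kA_{D,N} = \bigcup_{i=0}^{k}\bigl((k-i)\cdot e_D + iA_{D-1,N-1}\bigr)$ (slicing along the last coordinate axis) and then applying the hockey-stick identity. Both arguments ultimately rest on the same pair of binomial sums; yours is a one-shot count while the paper's is a recursion.

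There is, however, a concrete slip in your tentative fibre count $N(s)$. Given $s = x_2 + \dots + x_d$, you correctly note that at least $s$ summands must come from the head $\{0, e_2, \dots, e_d\}$, but in writing $N(s)=(k-s)(N-d-1)+2$ you are effectively only allowing $m=s$ head summands (all remaining $k-s$ summands from the tail, giving $x_1 \in \{k-s, \dots, (k-s)(N-d)\}$) together with $m=k$ (giving $x_1=0$). This misses the intermediate values $m \in \{s+1, \dots, k-1\}$: for any $s \le m \le k$ one may use $m$ head summands (of which $m-s$ equal $0$) and $k-m$ tail summands, so the set of attainable $x_1$ is $\bigcup_{j=0}^{k-s}\{j, j+1, \dots, j(N-d)\}$. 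Since $N-d \ge 1$, consecutive intervals here overlap or abut, and the union is the full interval $\{0, 1, \dots, (k-s)(N-d)\}$. Hence the correct fibre size is
\[
N(s) = (k-s)(N-d) + 1 \qquad (0 \le s \le k),
\]
uniformly, with no special case at $s=k$.

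With this correction the computation you outline becomes
\[
\sum_{s=0}^{k}\binom{s+d-2}{d-2}\bigl[(k-s)(N-d)+1\bigr]
= (N-d)\binom{k+d-1}{d} + \binom{k+d-1}{d-1},
\]
using exactly the two hockey-stick identities you state; your sanity checks at $d=1$ and $k=1$ then go through cleanly.
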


\begin{proof}
We will prove this via induction on $d$. The $d=1$ case of this is trivial. Indeed, note that $kA_{1,N} = \{0,e_1,\dots, kN-k\}$, which gives us $|kA_{1,N}| = k(N-1) +1.$ We now move to the inductive step, and so, suppose that we have proved Lemma \ref{comp} for all $d \leq D-1$, for some integer $D \geq 2$. Now observe that
    \[ kA_{D,N} = \cup_{i=0}^k ( (k-i) \cdot e_D + i A_{D-1,N-1}) ,\]
    whence, 
    \[ |kA_{D,N}|  = 1 + \sum_{i=1}^k  |i A_{D-1,N-1}| \]
Applying the inductive hypothesis for $|i A_{D-1,N-1}|$ for every $1 \leq i \leq k$, we get that
\begin{align*}
    |kA_{D,N}| &  
    = 1 + \sum_{i=1}^k \Bigg( \binom{i+D-2}{D-1} (N-D) +  \binom{i+D-2}{D-2} \Bigg) \\
    & = \binom{k-1 + D}{D} (N-D) + \binom{k-1 + D}{D-1} ,
\end{align*}
where the last step follows from applying the standard binominal identity
 \[  \binom{n+l}{l}  = 1 + \binom{l}{l-1}+ \dots +\binom{n+l-2}{l-1} + \binom{n+l-1}{l-1}   \]
twice, once with $(n,l) = (k-1, D)$ and the other time with $(n,l) = (k, D-1)$. This concludes the inductive step, and so, we have proved Lemma \ref{comp}.
\end{proof}

We are now ready to prove Theorem \ref{manysums}.

\begin{proof}[Proof of Theorem \ref{manysums}]
Let $A \subset \RR^d$ be a set such that $\dim(A) = d$ and $|A| = N$ for some $N \geq d+1$. We begin by claiming that it suffices to consider the case when $A \subset \mathbb{Z}^d$ such that $\{0,e_1, \dots, e_d\} \subseteq A$. In order to see this, first translate $A$ so as to ensure that $0 \in A$. We now consider the largest subset $S = \{\vec{s}_1, \dots, \vec{s}_r\} \subseteq A$ which is linearly independent over $\QQ$. Note that $r \geq d$ since $\dim(A) = d$. Moreover, for any $\vec{a} \in A$, there exist $q_1, \dots, q_r \in \mathbb{Q}$ such that $\vec{a} = \sum_{i=1}^r q_i \cdot \vec{s}_i$. We may now consider the map $\psi: A \to \mathbb{Z}^{r}$ which satisfies $\psi(\sum_{i=1}^r q_i \cdot \vec{s}_i) =  (Mq_1, \dots, Mq_r)$, where $M$ is a suitably chosen positive integer which ensures that $\psi(\vec{a}) \in \mathbb{Z}^r$ for every $\vec{a} \in A$. Next, since $\{0,e_1, \dots, e_r\} \subseteq \psi(A) \subset \mathbb{Z}^r$ is a finite set and $r \geq d$, we may find some suitably large integer $X>0$ such that the map $\sigma : \mathbb{Z}^r \to \mathbb{Z}^d$ satisfying
\[ \sigma((b_1, \dots, b_r)) =  (b_1, \dots, b_{d-1}, b_d + X b_{d+1} + \dots + X^{r-d} b_r) \]
for every $(b_1, \dots, b_r) \in \psi(A)$ further satisfies the fact that for every $\vec{a}_1, \dots, \vec{a}_{2k} \in A$, one has
\[  \sum_{i=1}^k (\psi(\vec{a}_i) - \psi(\vec{a}_{k+i})) = 0 \ \ \text{if and only if} \ \ \sum_{i=1}^k (\sigma(\psi(\vec{a}_i)) - \sigma(\psi(\vec{a}_{k+i})) = 0 .\]
 In particular, writing $A' = \sigma(\psi(A))$, we see that $\{0,e_1, \dots, e_d\} \subseteq A' \subset \mathbb{Z}^d$, and $|A'| = N$, and $|kA'|  = |kA|$, whence our aforementioned claim is justified.

Noting Lemma \ref{gardnergronchi}, we may apply Lemma \ref{compression} iteratively to deduce that 
 \[ |kA| \geq |k\mathcal{C}_{H_1, \vec{v}_1}(A)| \geq \dots \geq  |k\mathcal{C}_{H_l, \vec{v}_l}( \dots (\mathcal{C}_{H_2, \vec{v}_2}(\mathcal{C}_{H_1, \vec{v}_1}(A))) \dots )| = |kA_{d,N}|. \]
We finish our proof by applying Lemma \ref{comp} which implies that
\begin{align*}
    |kA|  \geq |kA_{d,N}|  & = \binom{k+d-1}{d}(N-d)  + \binom{k+d -1}{d-1} \\
    & = \binom{k+d-1}{d}|A| - (k-1) \binom{k+d -1}{d-1}. \qedhere
\end{align*} 
\end{proof}


\bibliographystyle{amsbracket}

\begin{thebibliography}{18}


\bibitem{BL1989}
B. Bollob\'{a}s, I. Leader, \emph{Compressions and isoperimetric inequalities}, J. Combin. Theory Ser. A \textbf{56} (1991), no.1, 47--62.

\bibitem{Bu2007}
B. Bukh, \emph{Problems}, available online at http://www.borisbukh.org/problems.html .

\bibitem{Bu2008}
B. Bukh, \emph{Sums of dilates}, Combin. Probab. Comput. \textbf{17} (2008), no. 5, 627--639.


\bibitem{CL2021}
D. Conlon, J. Lim, \emph{Difference sets in $\mathbb{R}^d$}, to appear in Israel J. Math, arXiv:2110.09053.


\bibitem{CL2022}
D. Conlon, J. Lim, \emph{Sums of linear transformations}, to appear in Trans. Amer. Math. Soc., arXiv:2203.09827.


\bibitem{Fr1973}
G. Freiman, \emph{Foundations of Structural Theory of Set Addition}, Transl. Math. Monographs \textbf{37}, Amer. Math. Soc. Providence, R.I. 1973.

\bibitem{GG2001}
R. J. Gardner, P. Gronchi, \emph{A Brunn-Minkowski inequality for the integer lattice}, Trans. Amer. Math.
Soc. \textbf{353} (2001), no. 10, 3995-4024.


\bibitem{GGMT2023}
W. T. Gowers, B. Green, F. Manners, T. Tao, \emph{On a conjecture of Marton}, arXiv:2311.05762.



\bibitem{GSW2023}
A. Granville, G. Shakan, A. Walker, \emph{Effective results on the size and structure of sumsets}, Combinatorica \textbf{43} (2023), no.6, 1139-1178.


\bibitem{GR2007}
B. Green, I. Z. Ruzsa, \emph{Freiman’s theorem in an arbitrary abelian group}, J. Lond. Math. Soc. (2) \textbf{75} (2007), no. 1, 163-175.


\bibitem{GT2006}
B. Green, T. Tao, \emph{Compressions, convex geometry and the Freiman-Bilu theorem}, Q. J. Math. \textbf{57} (2006), no. 4, 495-504.

\bibitem{GS2010}
D. Grynkiewicz, O. Serra, \emph{Properties of two-dimensional sets with small sumset}, J. Combin. Theory Ser. A \textbf{117} (2010), no. 2, 164-188.


\bibitem{Kh1992}
A. G. Khovanski\u{\i}, \emph{The Newton polytope, the Hilbert polynomial and sums of finite sets}, 
Funct. Anal. Appl. \textbf{26} (1992), no. 4, 276-281 (1993).

\bibitem{KP2020}
D. Krachun, F. Petrov, \emph{On the size of $A+ \lambda A$ for algebraic $\lambda$}, Mosc. J. Comb. Number Theory \textbf{12} (2023), no.2, 117-126.

\bibitem{MRSZ2022}
D. Matolcsi, I. Z. Ruzsa, G. Shakan, D. Zhelezov, \emph{An Analytic Approach to Cardinalities of Sumsets,} Combinatorica \textbf{42}, 203-236 (2022).

\bibitem{Mu2019}
A. Mudgal, \emph{Sums of linear transformations in higher dimensions}, Q. J. Math. \textbf{70} (2019), 965-984.

\bibitem{Mu2022}
A. Mudgal, \emph{New lower bounds for cardinalities of higher dimensional difference sets and sumsets}, Discrete Analysis 2022, Paper No. 15, 19 pp.

\bibitem{Mu2023}
A. Mudgal, \emph{Unbounded expansion of polynomials and products}, to appear in Mathematische Annalen, arXiv:2303.15910.

\bibitem{Mu2023b}
A. Mudgal, \emph{ An Elekes--R\'{o}nyai theorem for sets with few products},  Int. Math. Res. Not. IMRN (2024), no. 13, 10410--10424.

\bibitem{PZ2020}
 D. P\'{a}lv\"{o}lgyi, D. Zhelezov, \emph{Query complexity and the polynomial Freiman-Ruzsa conjecture}, Adv. Math. \textbf{392} (2021), Paper No. 108043, 18 pp.

\bibitem{Pe2012}
 G. Petridis, \emph{New proofs of Pl\"{u}nnecke-type estimates for product sets in groups}, Combinatorica \textbf{32}
(2012), no. 6, 721--733.



\end{thebibliography}
\providecommand{\bysame}{\leavevmode\hbox to3em{\hrulefill}\thinspace}

\end{document}